\newtheorem{Theorem}{Theorem}[section]
\newtheorem{defn}[Theorem]{Definition}
\newtheorem{lemma}[Theorem]{Lemma}
\title[Complex Intuitionistic fuzzy bracket product]
{Complex Intuitionistic fuzzy bracket product}
\author[Ameer Jaber]{}
 \email{ameerj@hu.edu.jo}
 \author[Rania Shaqbou'a]{}
 \email{Rania.Shaqboua@hu.edu.jo}
\keywords{complex intuitionistic fuzzy set, complex intuitionistic fuzzy Lie sub-superalgebra, complex intuitionistic fuzzy ideal, Lie superalgebras}
\begin{document}

\maketitle

\centerline{\scshape  Ameer Jaber}
\medskip

{\footnotesize \centerline{ Department of Mathematics }
\centerline{ The Hashemite University} \centerline{ Zarqa 13115, Jordan
} }
\bigskip
\centerline{\scshape Rania Shaqbou'a}
\medskip
{\footnotesize \centerline{ Department of Mathematics }
\centerline{ The Hashemite University} \centerline{ Zarqa 13115, Jordan
} }
\begin{abstract}
A complex intuitionistic fuzzy Lie superalgebra is a generalization of intuitionistic fuzzy Lie superalgebra whose membership function takes values in the unit disk in the complex plane. In \cite{AMR}, we introduced and studied the concepts of complex intuitionistic fuzzy Lie sub-superalgebras and complex intuitionistic fuzzy ideals of Lie superalgebras. Moreover, we also, in \cite{AMR}, defined the image and preimage of complex intuitionistic fuzzy Lie sub-superalgebra under Lie superalgebra anti-homomorphism, and he investigated the properties of anti-complex intuitionistic fuzzy Lie sub-superalgebras and anti-complex intuitionistic fuzzy ideals under anti-homomorphisms of Lie superalgebras. In this research, we use the concepts of complex intuitionistic fuzzy Lie superalgebras to introduce the complex intuitionistic fuzzy bracket products. Finally, we use the definitions of the image and preimage of complex intuitionistic fuzzy ideals under Lie superalgebra anti-homomorphisms, to study the characterizations of the image and preimage of the complex intuitionistic fuzzy bracket products under Lie superalgebra anti-homomorphisms.\\ \\
\textbf{AMS classification}: 08A72, 03E72, 20N25.
\end{abstract}
\section{Introduction}
The notion of intuitionistic fuzzy sets was introduced by Atanassov (see \cite{A0}). He presented in \cite{A0} the idea of intuitionistic fuzzy sets. He also in \cite{A} defined some properties of intuitionistic fuzzy sets. Atanassov presented in \cite{A1} interesting new operations about intuitionistic fuzzy sets. An intuitionistic fuzzy set is the generalization of fuzzy set. Recently, Biswas applied the concepts of intuitionistic fuzzy sets to the theory of groups and studied intuitionistic fuzzy subgroups of a group (see \cite{RB}); also Banerjee studied intuitionistic fuzzy subrings and ideals of a ring (see \cite{BB}). Moreover, Jun investigated the concept of intuitionistic nil-radicals of intuitionistic fuzzy ideals in rings (see \cite{YB}) and Davvaz, Dudek and Jun applied the notion of intuitionistic fuzzy sets to certain types of modules (see \cite{BD}). Then in \cite{CZ} W. chen and S. Zhang introduced the concept of intuitionistic fuzzy Lie superalgebras and intuitionistic fuzzy ideals. It is known that fuzzy sets are intuitionistic fuzzy sets but the converse is not necessarily true (for more details see \cite{A}). More recently, Alkouri and Salleh gave in \cite{AS} the idea of complex intuitionistic fuzzy subsets and then they enlarge the basic properties of it. This concept became more effective and useful in scientific field because it deals with degree of membership and nonmembership in complex plane. They also initiated the concept of complex intuitionistic fuzzy relation and developed fundamental operation of complex intuitionistic fuzzy sets in \cite{AS1, AS2}. Then Garg and Rani made in \cite{GR} a huge effort to generalize the notion of complex intuitionistic fuzzy sets in decision-making problems.\\
In \cite{SS-1}, S. Shaqaqha introduced the concepts of complex fuzzy sets to the theory of Lie algebras and studied complex fuzzy Lie subalgebras. Furthermore, in \cite{SS-2, SS-3} S. Shaqaqha and M. Al-Deiakeh introduced the concepts of complex intuitionistic fuzzy Lie algebras and complex intuitionistic fuzzy Lie ideals and they studied the relation between complex intuitionistic fuzzy Lie subalgebras (ideals) and intuitionistic fuzzy Lie subalgebras (ideals).\\
Following the same approach as in \cite{AMR}, we start this research as follows. In section 2, we recall some basic definitions and notions which will be used in what follows. In section 3, we introduce the definitions of $\mathbb{Z}_2$-graded complex intuitionistic fuzzy  vector subspaces, and complex intuitionistic fuzzy ideals, then we study the conditions on complex intuitionistic fuzzy bracket products to be $\mathbb{Z}_2$-graded complex intuitionistic fuzzy subspaces, and complex intuitionistic fuzzy ideals. Finally, in section 4, we discuss the properties of the images and preimages of complex intuitionistic fuzzy bracket products under Lie superalgebra anti-homomorphisms.
\section{Complex intuitionistic fuzzy sets}
Let $X\not=\phi$. A complex intuitionistic fuzzy set on $X$ is an object having the form $A=\{(x,\lambda_A(x),\rho_A(x))\ | x\in X\}$, where the complex functions $\lambda_A : X\rightarrow \mathbb{C}$ and $\rho_A : X\rightarrow \mathbb{C}$ denote the degree of membership (namely $\lambda_A(x)$) and the degree of non-membership (namely $\rho_A(x)$) of each element $x\in X$ to the set $A$, respectively, that assign to any element $x\in X$ complex numbers $\lambda_A(x)$, $\rho_A(x)$ lie within the unit circle with the property $|\lambda_A(x)|+|\rho_A(x)|\leq 1$.
For the sake of simplicity, we shall use the symbol $A=(\lambda_A, \rho_A)$ for the complex intuitionistic fuzzy set $A=\{(x,\lambda_A(x),\rho_A(x))\ | x\in X\}$.\\
We shall assume $\lambda_A(x)$, $\rho_A(x)$ will be represented by $r_A(x)e^{i2\pi \omega_A(x)}$ and $\hat{r}_A(x)e^{i2\pi \hat{\omega}_A(x)}$, respectively, where $i=\sqrt{-1} $, $r_A(x), \hat{r}_A(x), \omega_A(x), \hat{\omega}_A(x) \in [0,1]$. Thus the property of $|\lambda_A(x)|+|\rho_A(x)|\leq 1$ implies $r_A(x)+\hat{r}_A(x)\leq1$. Note that the intuitionistic fuzzy set is a special case of complex intuitionistic fuzzy set with $\omega_A(x)=\hat{\omega}_A(x)=0.$
Also, if $\rho_A(x)=(1-r_A(x))e^{i2\pi(1-\omega_A(x))}$, then we obtain a complex fuzzy set.
Let $\alpha e^{i2\pi\beta}$ and $\gamma e^{i2\pi\delta}$ be two complex numbers, where $\alpha, \beta, \gamma, \delta\in [0,1]$. By $\alpha e^{i2\pi\beta}\leq \gamma e^{i2\pi\delta}$ we mean $\alpha\leq\gamma$ and $\beta\leq\delta$.
In this paper,we use the symbols $a\wedge b=min\{a,b\}$ and $a\vee b=max\{a,b\}$.
Let $A=(\lambda_A,\rho_A)$ be a complex intuitionistic fuzzy set on $X$ with the degree of membership $\lambda_A(x)=r_A(x)e^{i2\pi \omega_A(x)}$ and the degree of non-membership $\rho_A(x)=\hat{r}_A(x)e^{i2\pi\hat{\omega}_A(x)}$. Then $A$ is said to be a homogeneous complex intuitionistic fuzzy set if the following two conditions hold $\forall x,y\in X$\\
(1) $r_A(x)\leq r_A(y)$ if and only if $\omega_A(x)\leq\omega_A(y)$,\\
(2) $\hat{r}_A(x)\leq \hat{r}_A(y)$ if and only if $\hat{\omega}_A(x)\leq\hat{\omega}_A(y)$.\\
Let $A=(\lambda_A,\rho_A)$ and $B=(\lambda_B,\rho_B)$ be two complex intuitionistic fuzzy sets on the same set $X$, we say that $A$ is homogeneous with $B$ if the following conditions hold $\forall x,y\in X$\\
(1) $r_A(x)\leq r_B(y)$ if and only if $\omega_A(x)\leq\omega_B(y)$,\\
(2) $\hat{r}_A(x)\leq \hat{r}_B(y)$ if and only if $\hat{\omega}_A(x)\leq\hat{\omega}_B(y)$.\\
\begin{defn}
Let $K$ be any field, and let $\mathbf{V}$ be a $K$-vector space. A complex intuitionistic fuzzy (CIF for short) set on $\mathbf{V}$ defined as an object having the form $A=\{(x,\lambda_A(x),\rho_A(x))\ | x\in \mathbf{V}\}$, where the complex functions $\lambda_A : \mathbf{V}\rightarrow \mathbb{C}$ and $\rho_A : \mathbf{V}\rightarrow \mathbb{C}$ denote the degree of membership (namely $\lambda_A(x)$) and the degree of non-membership (namely $\rho_A(x)$) of each element $x\in \mathbf{V}$ to the set $A$, respectively, that assign to any element $x\in \mathbf{V}$ complex numbers $\lambda_A(x)$, $\rho_A(x)$ lie within the unit circle with the property $|\lambda_A(x)|+|\rho_A(x)|\leq 1$.
\end{defn}
We shall use the symbol $A=(\lambda_A, \rho_A)$ for the CIF set $A=\{(x,\lambda_A(x),\rho_A(x))\ | x\in \mathbf{V}\}$.
\begin{defn}
Let $A=(\lambda_A,\rho_A)$ and $B=(\lambda_B,\rho_B)$ be CIF sets of a vector subspace $\mathbf{V}$. Then $A\subseteq B$ if $\lambda_A(x)\leq\lambda_B(x)$ and $\rho_A(x)\geq\rho_B(x)$ for all $x\in \mathbf{V}$.
\end{defn}
\begin{defn}
Let $\mathbf{V}$ be a $K$-vector space. A CIF set $A=(\lambda_A, \rho_A)$ of a vector space $\mathbf{V}$ is called a CIF vector space of $\mathbf{V}$, if it satisfies the following conditions\\
for any $x,y\in\mathbf{V}$, $\alpha\in K$\\
(1) $\lambda_A(x+y)\geq \lambda_A(x)\wedge\lambda_A(y)$, and $\rho_A(x+y)\leq \rho_A(x)\vee\rho_A(y)$\\
(2) $\lambda_A(\alpha x)\geq \lambda_A(x)$, and $\rho_A(\alpha x)\leq \rho_A(x)$.
\end{defn}
From this definition, we know that for any $x\in\mathbf{V}$, $\lambda_A(0)\geq\lambda_A(x)$ and $\rho_A(0)\leq\rho_A(x)$. In this paper, we always assume that $\lambda_A(0)=1e^{i2\pi}=1$ and $\rho_A(0)=0e^{i2(\pi)0}=0$.
\begin{defn}\cite{AMR}\label{def-1}
Let $A=(\lambda_A,\rho_A)$ and $B=(\lambda_B,\rho_B)$ be CIF vector subspaces of a vector subspace $\mathbf{V}$, where $\lambda_A=r_Ae^{i2\pi\omega_A},\lambda_B=r_Be^{i2\pi\omega_B}$and $\rho_A=\hat{r}_Ae^{i2\pi\hat{\omega}_A}, \rho_B=\hat{r}_Be^{i2\pi\hat{\omega}_B}$. If $A$ is homogenous with $B$. Then the complex intuitionistic sum of $A=(\lambda_A,\rho_A)$ and $B=(\lambda_B,\rho_B)$ is defined to the CIF set $A+B=(\lambda_{A+B},\rho_{A+B})$ of $\mathbf{V}$ given by
$$\lambda_{A+B}(x)=\begin{cases} \sup\limits_{x=a+b}\{(r_A(a)\wedge r_B(b))\}e^{i2\pi\sup\limits_{x=a+b}\{ (\omega_A(a)\wedge\omega_B(b))\}} & :\quad {\rm if}\ x=a+b\\
0 & :\quad otherwise,\end{cases}$$
$$\rho_{A+B}(x)=\begin{cases} \inf\limits_{x=a+b}\{(\hat{r}_A(a)\vee\hat{r}_B(b))\}e^{i2\pi\inf\limits_{x=a+b}\{ (\hat{\omega}_A(a)\vee\hat{\omega}_B(b))\}} & :\quad {\rm if}\ x=a+b\\
1 & :\quad otherwise,\end{cases}$$
\end{defn}
Further, if $A\cap B=(\lambda_{A\cap B},\rho_{A\cap B})$, where
$$\lambda_{A\cap B}(x)=\begin{cases} 0 & :\quad x\not=0\\
1 & :\quad x=0,\end{cases}\ \ \ {\rm and}\ \ \
\rho_{A\cap B}(x)=\begin{cases} 1 & :\quad x\not=0\\
0 & :\quad x=0,\end{cases}.$$ Then $A+B$ is said to be the direct sum and denoted by $A\oplus B$.
\begin{lemma}\cite{AMR}\label{mylemma-1}\textnormal{
Let $A=(\lambda_A,\rho_A)$ and $B=(\lambda_B,\rho_B)$ be CIF vector subspaces of a vector space $\mathbf{V}$ such that $A$ is homogenous with $B$. Then $A+B=(\lambda_{A+B},\rho_{A+B})$ is also a CIF vector subspaces of $\mathbf{V}$.}
\end{lemma}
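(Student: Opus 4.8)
The plan is to verify directly that $A+B=(\lambda_{A+B},\rho_{A+B})$ satisfies the two defining conditions of a CIF vector subspace, working coordinatewise on the modulus and on the phase of the membership function (and dually on the non-membership function). The first thing I would record is that every $x\in\mathbf{V}$ admits the trivial decomposition $x=x+0$, so the supremum defining $\lambda_{A+B}(x)$ and the infimum defining $\rho_{A+B}(x)$ run over a nonempty set and the \emph{otherwise} branches in Definition~\ref{def-1} never occur. Hence for every $x$ I may write $r_{A+B}(x)=\sup_{x=a+b}\{r_A(a)\wedge r_B(b)\}$ and $\omega_{A+B}(x)=\sup_{x=a+b}\{\omega_A(a)\wedge\omega_B(b)\}$, with the dual infimum formulas for $\hat r_{A+B}$ and $\hat\omega_{A+B}$.

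For the scalar condition I would fix $\alpha\in K$ and $x\in\mathbf{V}$. Every decomposition $x=a+b$ yields a decomposition $\alpha x=\alpha a+\alpha b$, and condition (2) for $A$ and $B$ gives $r_A(\alpha a)\ge r_A(a)$ and $r_B(\alpha b)\ge r_B(b)$, so $r_A(\alpha a)\wedge r_B(\alpha b)\ge r_A(a)\wedge r_B(b)$. Taking the supremum over all decompositions of $x$ gives $r_{A+B}(\alpha x)\ge r_{A+B}(x)$, and the same computation in the phase gives $\omega_{A+B}(\alpha x)\ge\omega_{A+B}(x)$; together these yield $\lambda_{A+B}(\alpha x)\ge\lambda_{A+B}(x)$. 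The mirror computation with $\inf$ and $\vee$ gives $\rho_{A+B}(\alpha x)\le\rho_{A+B}(x)$.

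The additive condition is where the real work lies. Given decompositions $x=a_1+b_1$ and $y=a_2+b_2$, I would use $(a_1+a_2)+(b_1+b_2)$ as a decomposition of $x+y$ and combine condition (1) for $A$ and $B$ with the commutativity of $\wedge$ to obtain
\[
r_A(a_1+a_2)\wedge r_B(b_1+b_2)\ \ge\ \bigl[r_A(a_1)\wedge r_B(b_1)\bigr]\wedge\bigl[r_A(a_2)\wedge r_B(b_2)\bigr].
\]
Since the left-hand side is one of the terms in the supremum defining $r_{A+B}(x+y)$, I would then pass to the supremum in the two decompositions \emph{successively}, moving each supremum through the meet by the elementary lattice identity $\sup_i(t_i\wedge c)=(\sup_i t_i)\wedge c$ valid for a fixed constant $c$. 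This produces $r_{A+B}(x+y)\ge r_{A+B}(x)\wedge r_{A+B}(y)$, and the identical argument in the phase gives $\omega_{A+B}(x+y)\ge\omega_{A+B}(x)\wedge\omega_{A+B}(y)$, whence $\lambda_{A+B}(x+y)\ge\lambda_{A+B}(x)\wedge\lambda_{A+B}(y)$. The non-membership inequality $\rho_{A+B}(x+y)\le\rho_{A+B}(x)\vee\rho_{A+B}(y)$ is proved by the mirror argument, using $\hat r_A(a_1+a_2)\le\hat r_A(a_1)\vee\hat r_A(a_2)$ together with the dual identity $\inf_i(t_i\vee d)=(\inf_i t_i)\vee d$ and successive infima.

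The hard part will be the interchange of the two \emph{independent} suprema (resp.\ infima) with the meet (resp.\ join); this is why I would isolate the one-variable identity $\sup_i(t_i\wedge c)=(\sup_i t_i)\wedge c$ and apply it twice rather than try to manage a simultaneous double supremum. That $A+B$ is a genuine CIF set causes no trouble: on each decomposition $r_A(a)\wedge r_B(b)\le 1-(\hat r_A(a)\vee\hat r_B(b))$, and a short limiting argument then gives $r_{A+B}(x)+\hat r_{A+B}(x)\le 1$. The hypothesis that $A$ is homogeneous with $B$ enters only to make $A+B$ well defined (Definition~\ref{def-1}) and to guarantee that the moduli and the phases vary comonotonically, so that the coordinatewise meet $\lambda_{A+B}(x)\wedge\lambda_{A+B}(y)$ really is the meet in the partial order $\le$; all remaining content reduces to monotonicity of $\sup$ and $\inf$ together with the CIF axioms for $A$ and $B$ applied coordinate by coordinate.
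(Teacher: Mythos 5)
The paper itself gives no proof of this lemma: it is stated with a citation to \cite{AMR} and recalled as a known result, so there is nothing in the present text to compare your argument against line by line. On its own merits your proof is correct and is the natural direct verification. The points you single out are exactly the ones that need care: the observation that $x=x+0$ makes the defining supremum and infimum run over a nonempty set (so the \emph{otherwise} branch of Definition~\ref{def-1} is vacuous); the passage from the pointwise inequality $r_A(a_1+a_2)\wedge r_B(b_1+b_2)\ge\bigl[r_A(a_1)\wedge r_B(b_1)\bigr]\wedge\bigl[r_A(a_2)\wedge r_B(b_2)\bigr]$ to the inequality between suprema via two successive applications of $\sup_i(t_i\wedge c)=(\sup_i t_i)\wedge c$; the dual computation for $\rho$; and the role of homogeneity in ensuring that the coordinatewise meet of $\lambda_{A+B}(x)$ and $\lambda_{A+B}(y)$ is the meet in the partial order the paper uses on the unit disk. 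Your closing remark that $r_{A+B}(x)+\hat r_{A+B}(x)\le 1$ follows from $r_A(a)\wedge r_B(b)\le 1-(\hat r_A(a)\vee\hat r_B(b))$ by taking suprema is also sound, and is a point the paper never addresses explicitly. In short: a complete and correct proof of a statement the paper leaves unproved.
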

\begin{defn}\cite{AMR}
Let $A=(\lambda_A, \rho_A)$ be a CIF vector subspace of a $K$-vector space $\mathbf{V}$. For $\alpha\in K$ and $x\in \mathbf{V}$, define $\alpha A=(\lambda_{\alpha A}, \rho_{\alpha A})$, where $$\lambda_{\alpha A}(x)=\begin{cases} \lambda_A(\alpha^{-1}x)=r_A(\alpha^{-1}x)e^{i2\pi\omega_A(\alpha^{-1}x)} & :\quad \alpha\not=0\\
1 & :\quad \alpha=0, x=0\\
0 & :\quad \alpha=0, x\not=0\end{cases}$$ and
$$\rho_{\alpha A}(x)=\begin{cases} \rho_A(\alpha^{-1}x)=\hat{r}_A(\alpha^{-1}x)e^{i2\pi\hat{\omega}_A(\alpha^{-1}x)} & :\quad \alpha\not=0\\
0 & :\quad \alpha=0, x=0\\
1 & :\quad \alpha=0, x\not=0\end{cases}$$
\end{defn}
\section{Complex intuitionistic fuzzy brackets}
In this section we assume that $\mathbf{V}$ is a $\mathbb{Z}_2$-graded vector space over a field $K$. Also we assume that $A$ is homogeneous with $B$ for any two complex intuitionistic fuzzy sets $A=(\lambda_A,\rho_A)$, $B=(\lambda_B,\rho_B)$ of $\mathbf{V}$.
\begin{defn}\cite{AMR}\label{def-2}
Let $\mathbf{V}=\mathbf{V}_0+\mathbf{V}_1$ be a $\mathbb{Z}_2$-graded vector space. Suppose that $A_0=(\lambda_{A_0}, \rho_{A_0})$ and $A_1=(\lambda_{A_1}, \rho_{A_1})$ are CIF vector subspaces of $\mathbf{V}_0$ and $\mathbf{V}_1$, respectively. Define $\mathfrak{a}_0=(\lambda_{\mathfrak{a}_0}, \rho_{\mathfrak{a}_0})$
where
$$\lambda_{\mathfrak{a}_0}(x)=\begin{cases} \lambda_{A_0}(x) & :\quad x\in \mathbf{V}_0\\
0 & :\quad x\not\in \mathbf{V}_0\end{cases}\ \ and\ \
\rho_{\mathfrak{a}_0}(x)=\begin{cases} \rho_{A_0}(x) & :\quad x\in \mathbf{V}_0\\
1 & :\quad x\not\in \mathbf{V}_0\end{cases}$$ and define $\mathfrak{a}_1=(\lambda_{\mathfrak{a}_1}, \rho_{\mathfrak{a}_1})$
where
$$\lambda_{\mathfrak{a}_1}(x)=\begin{cases} \lambda_{A_1}(x) & :\quad x\in \mathbf{V}_1\\
0 & :\quad x\not\in \mathbf{V}_1\end{cases}\ \ and\ \
\rho_{\mathfrak{a}_1}(x)=\begin{cases} \rho_{A_1}(x) & :\quad x\in \mathbf{V}_1\\
1 & :\quad x\not\in \mathbf{V}_1\end{cases}$$
\end{defn}
Then $\mathfrak{a}_0=(\lambda_{\mathfrak{a}_0}, \rho_{\mathfrak{a}_0})$ and $\mathfrak{a}_1=(\lambda_{\mathfrak{a}_1}, \rho_{\mathfrak{a}_1})$ are the CIF vector subspaces of $\mathbf{V}$. Moreover, we have $\mathfrak{a}_0\cap \mathfrak{a}_1=(\lambda_{\mathfrak{a}_0\cap\mathfrak{a}_1}, \rho_{\mathfrak{a}_0\cap\mathfrak{a}_1})$, where $$\lambda_{\mathfrak{a}_0\cap \mathfrak{a}_1}(x)=\lambda_{\mathfrak{a}_0}(x)\wedge\lambda_{\mathfrak{a}_1}(x)=\begin{cases} 1 & :\quad x=0\\
0 & :\quad x\not=0,\end{cases}$$ and $$\rho_{\mathfrak{a}_0\cap \mathfrak{a}_1}(x)=\rho_{\mathfrak{a}_0}(x)\vee\rho_{\mathfrak{a}_1}(x)=\begin{cases} 0 & :\quad x=0\\ 1 & :\quad x\not=0.\end{cases}$$ So $\mathfrak{a}_0+\mathfrak{a}_1$ is the direct sum and is denoted by $A_0\oplus A_1$. If $A=(\lambda_A, \rho_A)$ is a CIF vector subspace of $\mathbf{V}$ and $A=A_0\oplus A_1$, then $A=(\lambda_A, \rho_A)$ is called a $\mathbb{Z}_2$-graded CIF vector subspaces of $\mathbf{V}$.
\begin{defn}\cite{AMR}
Let $\mathbf{V}=\mathbf{V}_0+\mathbf{V}_1$ be a $\mathbb{Z}_2$-graded vector space, and let $A=(\lambda_A,\rho_A)$  be CIF set of $\mathbf{V}$. Then $A=(\lambda_A,\rho_A)$ is called a CIF ideal of $\mathbf{V}$, if it satisfies the following conditions:\\
(1) $A=(\lambda_A,\rho_A)$ is a $\mathbb{Z}_2$-graded CIF vector subspace of $\mathbf{V}$,\\
(2) $\lambda_A([x,y])\geq\lambda_A(x)\vee \lambda_A(y)$ and $\rho_A([x,y])\leq\rho_A(x)\wedge\rho_A(y)$.
\end{defn}
\begin{Theorem}\cite{AMR}
If $A=(\lambda_A, \rho_A)$ and $B=(\lambda_B, \rho_B)$ are CIF ideals of $\mathbf{V}=\mathbf{V}_0+\mathbf{V}_1$, then so is $A+B=(\lambda_{A+B}, \rho_{A+B})$.
\end{Theorem}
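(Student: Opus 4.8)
The plan is to verify the two defining conditions of a CIF ideal for $A+B$ separately: that $A+B$ is a $\mathbb{Z}_2$-graded CIF vector subspace of $\mathbf{V}$, and that it satisfies the bracket inequalities. The subspace part of the first condition is already available. Since the standing assumption of this section guarantees that any two CIF sets of $\mathbf{V}$ are homogeneous with each other, in particular $A$ is homogeneous with $B$, and Lemma \ref{mylemma-1} then tells us that $A+B=(\lambda_{A+B},\rho_{A+B})$ is a CIF vector subspace of $\mathbf{V}$. Thus only the grading (to finish condition (1)) and the bracket estimate (condition (2)) remain to be done.

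For the grading I would write $A=A_0\oplus A_1$ and $B=B_0\oplus B_1$, which is legitimate because $A$ and $B$, being CIF ideals, are $\mathbb{Z}_2$-graded CIF vector subspaces. By the standing homogeneity assumption, $A_0$ is homogeneous with $B_0$ and $A_1$ with $B_1$, so $A_0+B_0$ and $A_1+B_1$ are CIF vector subspaces of $\mathbf{V}_0$ and $\mathbf{V}_1$ again by Lemma \ref{mylemma-1}. The goal is then the identification $A+B=(A_0+B_0)\oplus(A_1+B_1)$. Here I would regroup the iterated suprema and infima appearing in the definition of the CIF sum, using commutativity of addition in $\mathbf{V}$ and the fact that a nonzero contribution forces the $\mathbf{V}_0$-part of a summand to pair with a $\mathbf{V}_0$-part and the $\mathbf{V}_1$-part with a $\mathbf{V}_1$-part; this identifies the even component of $A+B$ with $A_0+B_0$ and the odd component with $A_1+B_1$. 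The direct-sum (trivial-intersection) requirement then follows from $\mathbf{V}_0\cap\mathbf{V}_1=\{0\}$, exactly as in the computation of $\mathfrak{a}_0\cap\mathfrak{a}_1$ recorded after Definition \ref{def-2}. I expect this bookkeeping of nested suprema, together with the piecewise ``otherwise'' branches of the definitions, to be the main obstacle.

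For the bracket condition, fix $x,y\in\mathbf{V}$ and any decomposition $x=a+b$. The key observation is that $[x,y]=[a+b,y]=[a,y]+[b,y]$ is itself a decomposition of $[x,y]$. Since $A$ is a CIF ideal, $r_A([a,y])\geq r_A(a)\vee r_A(y)\geq r_A(a)$ and $\omega_A([a,y])\geq\omega_A(a)$; since $B$ is a CIF ideal, $r_B([b,y])\geq r_B(b)$ and $\omega_B([b,y])\geq\omega_B(b)$. Hence $r_A([a,y])\wedge r_B([b,y])\geq r_A(a)\wedge r_B(b)$, and likewise for the phases. Because $\lambda_{A+B}([x,y])$ is the supremum over all decompositions of $[x,y]$, evaluating at the particular decomposition $[a,y]+[b,y]$ gives a lower bound $\geq r_A(a)\wedge r_B(b)$ in the magnitude and $\geq\omega_A(a)\wedge\omega_B(b)$ in the phase; as this holds for every $x=a+b$, taking the supremum over $x=a+b$ yields $\lambda_{A+B}([x,y])\geq\lambda_{A+B}(x)$. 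Decomposing $y$ in place of $x$ gives $\lambda_{A+B}([x,y])\geq\lambda_{A+B}(y)$, whence $\lambda_{A+B}([x,y])\geq\lambda_{A+B}(x)\vee\lambda_{A+B}(y)$. The dual estimate for $\rho_{A+B}$ runs identically with suprema replaced by infima and $\wedge$ by $\vee$: from $\hat{r}_A([a,y])\leq\hat{r}_A(a)$, $\hat{r}_B([b,y])\leq\hat{r}_B(b)$ and the analogous phase bounds one obtains $\rho_{A+B}([x,y])\leq\rho_{A+B}(x)\wedge\rho_{A+B}(y)$. Combining the grading of the previous paragraph with these two inequalities shows that $A+B$ is a CIF ideal of $\mathbf{V}$.
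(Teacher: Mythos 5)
This theorem is quoted from \cite{AMR} and the paper supplies no proof of it, so there is nothing internal to compare your argument against; judged on its own terms, your proof is correct. The three-step structure (subspace via Lemma~\ref{mylemma-1}, grading, bracket estimate) is exactly what is needed, and the bracket step is carried out cleanly: for each decomposition $x=a+b$, the induced decomposition $[x,y]=[a,y]+[b,y]$ together with the ideal inequalities $r_A([a,y])\geq r_A(a)$, $r_B([b,y])\geq r_B(b)$ (and their phase and non-membership analogues) gives $r_{A+B}([x,y])\geq r_A(a)\wedge r_B(b)$, and passing to the supremum over decompositions of $x$ (and symmetrically of $y$) yields condition (2); this is legitimate because the left-hand side does not depend on the chosen decomposition. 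The grading step is only sketched, but the sketch is sound: writing $a=a_0+a_1$, $b=b_0+b_1$ and using $r_A(a)=r_{A_0}(a_0)\wedge r_{A_1}(a_1)$, the identity $\sup_{s,t}\{f(s)\wedge g(t)\}=\bigl(\sup_s f(s)\bigr)\wedge\bigl(\sup_t g(t)\bigr)$ splits the supremum over $x=a+b$ into independent suprema over $x_0=a_0+b_0$ and $x_1=a_1+b_1$, identifying the components of $A+B$ with $A_0+B_0$ and $A_1+B_1$; the trivial-intersection condition then follows from $\mathbf{V}_0\cap\mathbf{V}_1=\{0\}$ as you say. If you write this up in full, you should spell out that interchange of supremum and minimum explicitly, and note (once) that the ``otherwise'' branches in Definition~\ref{def-1} are vacuous since every $x$ admits the decomposition $x=x+0$, so the suprema and infima are always over nonempty index sets.
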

\begin{defn}
Let $A=(\lambda_A,\rho_A)$ and $B=(\lambda_B,\rho_B)$ be CIF sets of $\mathbf{V}$ such that $A$ is homogenous with $B$. We define the complex intuitionistic fuzzy bracket product $[A,B]=(\lambda_{[A,B]},\rho_{[A,B]})$ where
$$\lambda_{[A,B]}(x)=\begin{cases} \sup\limits_{x=\sum\limits_{i\in N}\alpha_i[x_i,y_i]}\{\min\limits_{i\in N}\{r_A(x_i)\wedge r_B(y_i)\}e^{i2\pi\min\limits_{i\in N}\{\omega_A(x_i)\wedge\omega_B(y_i)\}}\} & :\quad {\rm where}\ \alpha_i\in K,x_i,y_i\in\mathbf{V}\\
0 & :\quad {\rm if}\ x\not=\sum\limits_{i\in N}\alpha_i[x_i,y_i],\end{cases}$$ and
$$\rho_{[A,B]}(x)=\begin{cases} \inf\limits_{x=\sum\limits_{i\in N}\alpha_i[x_i,y_i]}\{\max\limits_{i\in N}\{\hat{r}_A(x_i)\vee\hat{r}_B(y_i)\}e^{i2\pi\max\limits_{i\in N}\{\hat{\omega}_A(x_i)\vee\hat{\omega}_B(y_i)\}}\} & :\quad {\rm where}\ \alpha_i\in K,x_i,y_i\in \mathbf{V}\\
1 & :\quad {\rm if}\ x\not=\sum\limits_{i\in N}\alpha_i[x_i,y_i]\end{cases}.$$
\end{defn}
We have to remark that if $x,y\in\mathbf{V}$, then\\
(1) $\lambda_{[A,B]}([x,y])=r_{[A,B]}([x,y])e^{i2\pi\omega_{[A,B]}([x,y])}$ and $r_{[A,B]}([x,y])\geq r_A(x)\wedge r_B(y)$, $\omega_{[A,B]}([x,y])\geq\omega_A(x)\wedge\omega_B(y)$\\
(2) $\rho_{[A,B]}([x,y])=\hat{r}_{[A,B]}([x,y])e^{i2\pi\hat{\omega}_{[A,B]}([x,y])}$ and $\hat{r}_{[A,B]}([x,y])\leq r_A(x)\vee\hat{r}_B(y)$, $\hat{\omega}_{[A,B]}([x,y])\leq\hat{\omega}_A(x)\vee\hat{\omega}_B(y)$.
\begin{lemma}\label{lem-2}\textnormal{
Let $A_1=(\lambda_{A_1},\rho_{A_1})$, $A_2=(\lambda_{A_2},\rho_{A_2})$ and $B=(\lambda_B,\rho_B)$ be CIF sets of $\mathbf{V}$ such that $A_1\subseteq B$, $A_2\subseteq B$, then $A_1+A_2\subseteq B$.}
\end{lemma}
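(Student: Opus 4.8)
The plan is to reduce the single containment $A_1 + A_2 \subseteq B$ to four real-valued inequalities and verify each of them decomposition-by-decomposition. Note first that, since the blanket assumption of this section makes any two CIF sets of $\mathbf{V}$ mutually homogeneous, the sum $A_1+A_2$ is well defined by Definition~\ref{def-1}. Using the convention $\alpha e^{i2\pi\beta} \leq \gamma e^{i2\pi\delta} \iff \alpha \leq \gamma$ and $\beta \leq \delta$ together with the definition of inclusion, the hypotheses $A_1 \subseteq B$ and $A_2 \subseteq B$ unpack, for $j=1,2$ and every $u \in \mathbf{V}$, into $r_{A_j}(u) \leq r_B(u)$, $\omega_{A_j}(u) \leq \omega_B(u)$, $\hat{r}_{A_j}(u) \geq \hat{r}_B(u)$, and $\hat{\omega}_{A_j}(u) \geq \hat{\omega}_B(u)$. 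Likewise, the goal unpacks into $r_{A_1+A_2}(x) \leq r_B(x)$, $\omega_{A_1+A_2}(x) \leq \omega_B(x)$, $\hat{r}_{A_1+A_2}(x) \geq \hat{r}_B(x)$, and $\hat{\omega}_{A_1+A_2}(x) \geq \hat{\omega}_B(x)$ for every $x$.

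For the membership component, I would fix $x$ and an arbitrary decomposition $x = a + b$. Combining the inclusions with monotonicity of $\wedge$, and then the closure of $B$ under the vector-space operation, one chains $r_{A_1}(a) \wedge r_{A_2}(b) \leq r_B(a) \wedge r_B(b) \leq r_B(a+b) = r_B(x)$, and identically $\omega_{A_1}(a) \wedge \omega_{A_2}(b) \leq \omega_B(x)$. Since this bound holds for every decomposition, taking the supremum over $x = a+b$ preserves it, giving $r_{A_1+A_2}(x) = \sup_{x=a+b}\{r_{A_1}(a) \wedge r_{A_2}(b)\} \leq r_B(x)$ and similarly $\omega_{A_1+A_2}(x) \leq \omega_B(x)$, hence $\lambda_{A_1+A_2}(x) \leq \lambda_B(x)$.

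The non-membership component is dual: for a fixed decomposition $x = a+b$ the inclusions reverse the inequalities and $\vee$ replaces $\wedge$, so $\hat{r}_{A_1}(a) \vee \hat{r}_{A_2}(b) \geq \hat{r}_B(a) \vee \hat{r}_B(b) \geq \hat{r}_B(a+b) = \hat{r}_B(x)$, where the last step uses the closure property $\hat{r}_B(a+b) \leq \hat{r}_B(a) \vee \hat{r}_B(b)$ of $B$, and analogously for $\hat{\omega}$. As every term of the infimum defining $\rho_{A_1+A_2}(x)$ is $\geq \hat{r}_B(x)$ (respectively $\geq \hat{\omega}_B(x)$), the infimum over $x = a+b$ inherits the lower bound, yielding $\rho_{A_1+A_2}(x) \geq \rho_B(x)$. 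The second ("otherwise") branches in the definition of $A_1+A_2$ never arise, since in a vector space every $x$ admits the trivial decomposition $x = x + 0$.

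The only genuine content — and the step I would check most carefully — is pushing the per-decomposition estimates through the supremum and the infimum: for the membership part every term is bounded above by the single number $r_B(x)$, so the supremum remains $\leq r_B(x)$, while for the non-membership part every term is bounded below by $\hat{r}_B(x)$, so the infimum remains $\geq \hat{r}_B(x)$. This is precisely where the vector-space closure of $B$ is indispensable: without $r_B(a) \wedge r_B(b) \leq r_B(a+b)$ and its $\rho$-analogue, the term-wise bound would not collapse to a bound by the value of $B$ at $x$ itself. Thus the argument really relies on $B$ being a CIF vector subspace of $\mathbf{V}$, and not merely an arbitrary CIF set.
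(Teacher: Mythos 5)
Your proof is correct and follows essentially the same route as the paper's: unpack the inclusions componentwise, bound each decomposition term via $r_{A_1}(a)\wedge r_{A_2}(b)\leq r_B(a)\wedge r_B(b)\leq r_B(a+b)=r_B(x)$ (and dually for the non-membership data), then pass the bound through the supremum and infimum. Your closing observation is well taken and applies equally to the paper's own argument: the step $r_B(a)\wedge r_B(b)\leq r_B(a+b)$ silently uses that $B$ is a CIF vector subspace, even though the lemma is stated for mere CIF sets.
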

\begin{proof}
Let $x\in\mathbf{V}$. Then
\begin{align*}
\lambda_{A_1+A_2}(x)&=r_{A_1+A_2}(x)e^{i2\pi\omega_{A_1+A_2}(x)}\\
&\leq\sup_{x=a+b}\{(r_{A_1}(a)\wedge r_{A_2}(b))e^{i2\pi(\omega_{A_1}(a)\wedge\omega_{A_2}(b))}\}\\
&\leq\sup_{x=a+b}\{(r_B(a)\wedge r_B(b))e^{i2\pi(\omega_B(a)\wedge\omega_B(b))}\}\\
&\leq\sup_{x=a+b}\{r_B(a+b)e^{i2\pi\omega_B(a+b)}\}\\
&=r_B(x)e^{i2\pi\omega_B(x)}=\lambda_B(x)
\end{align*}
and
\begin{align*}
\rho_{A_1+A_2}(x)&=\hat{r}_{A_1+A_2}(x)e^{i2\pi\hat{\omega}_{A_1+A_2}(x)}\\
&\geq\inf_{x=a+b}\{(\hat{r}_{A_1}(a)\vee \hat{r}_{A_2}(b))e^{i2\pi(\hat{\omega}_{A_1}(a)\vee\hat{\omega}_{A_2}(b))}\}\\
&\geq\inf_{x=a+b}\{(\hat{r}_B(a)\vee\hat{r}_B(b))e^{i2\pi(\hat{\omega}_B(a)\vee\hat{\omega}_B(b))}\}\\
&\geq\inf_{x=a+b}\{\hat{r}_B(a+b)e^{i2\pi\hat{\omega}_B(a+b)}\}\\
&=\hat{r}_B(x)e^{i2\pi\hat{\omega}_B(x)}=\rho_B(x).
\end{align*}
Hence $A_1+A_2\subseteq B$.
\end{proof}
\begin{lemma}\label{lem-1}\textnormal{
Let $A_1=(\lambda_{A_1},\rho_{A_1})$, $A_2=(\lambda_{A_2},\rho_{A_2})$ and $B_1=(\lambda_{B_1},\rho_{B_1})$, $B_2=(\lambda_{B_2},\rho_{B_2})$ be CIF sets of $\mathbf{V}$ such that $A_1\subseteq A_2$, $B_1\subseteq B_2$. Then $[A_1,B_1]\subseteq[A_2,B_2]$. In particular, if $A=(\lambda_A,\rho_A)$, $B=(\lambda_B,\rho_B)$ are CIF sets of $\mathbf{V}$, then $[A_1,B]\subseteq[A_2,B]$ and $[A,B_1]\subseteq[A,B_2]$.}
\end{lemma}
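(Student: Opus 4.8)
The plan is to verify directly the two defining inequalities of the containment $[A_1,B_1]\subseteq[A_2,B_2]$, namely $\lambda_{[A_1,B_1]}(x)\leq\lambda_{[A_2,B_2]}(x)$ and $\rho_{[A_1,B_1]}(x)\geq\rho_{[A_2,B_2]}(x)$ for every $x\in\mathbf{V}$. Observe first that whether or not $x$ admits a representation $x=\sum_{i\in N}\alpha_i[x_i,y_i]$ is a condition on $\mathbf{V}$ alone and does not involve the fuzzy data, so the two boundary cases in the definition of the bracket product coincide on both sides: when $x$ has no such representation, both memberships equal $0$ and both non-memberships equal $1$, and the required inequalities hold trivially. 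It therefore remains to treat an $x$ that can be written as such a sum.

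Next I would unwind the hypotheses at the level of moduli and arguments. Using the definition of $\subseteq$ together with the order on $\mathbb{C}$ (where $\alpha e^{i2\pi\beta}\leq\gamma e^{i2\pi\delta}$ means $\alpha\leq\gamma$ and $\beta\leq\delta$), the assumption $A_1\subseteq A_2$ gives, for all $z\in\mathbf{V}$, the four inequalities $r_{A_1}(z)\leq r_{A_2}(z)$, $\omega_{A_1}(z)\leq\omega_{A_2}(z)$, $\hat{r}_{A_1}(z)\geq\hat{r}_{A_2}(z)$, and $\hat{\omega}_{A_1}(z)\geq\hat{\omega}_{A_2}(z)$, together with the analogous four inequalities for $B_1$ and $B_2$ coming from $B_1\subseteq B_2$.

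Now fix a representation $x=\sum_{i\in N}\alpha_i[x_i,y_i]$ and push these pointwise inequalities through the lattice operations. Since $\wedge$ is monotone, $r_{A_1}(x_i)\wedge r_{B_1}(y_i)\leq r_{A_2}(x_i)\wedge r_{B_2}(y_i)$ for each $i$, and taking the minimum over $i\in N$ preserves this; the same argument applies to the argument entries with $\omega$ in place of $r$. Dually, monotonicity of $\vee$ gives $\hat{r}_{A_1}(x_i)\vee\hat{r}_{B_1}(y_i)\geq\hat{r}_{A_2}(x_i)\vee\hat{r}_{B_2}(y_i)$, preserved under the maximum over $i$, and likewise for $\hat{\omega}$. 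Consequently, term by term, the complex summand appearing in $\lambda_{[A_1,B_1]}(x)$ is $\leq$ the corresponding summand for $[A_2,B_2]$ in the product order (both modulus and argument), while the summand in $\rho_{[A_1,B_1]}(x)$ is $\geq$ the one for $[A_2,B_2]$. Because both suprema (respectively both infima) range over exactly the same set of representations of $x$, passing to the supremum preserves $\leq$ and passing to the infimum preserves $\geq$, which yields the two desired inequalities and hence $[A_1,B_1]\subseteq[A_2,B_2]$.

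Finally, the two ``in particular'' statements are immediate specializations: setting $B_1=B_2=B$ and using $B\subseteq B$ gives $[A_1,B]\subseteq[A_2,B]$, while setting $A_1=A_2=A$ gives $[A,B_1]\subseteq[A,B_2]$. I do not anticipate any genuine obstacle here, since the argument is pure monotonicity bookkeeping; the only points that require care are that the order on $\mathbb{C}$ is the product order on $(r,\omega)$, so each inequality must be checked separately for the modulus and the argument, and that the index set $N$ and the scalars $\alpha_i$ are held fixed while comparing a single representation before passing to the supremum or infimum.
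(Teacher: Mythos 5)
Your proposal is correct and follows essentially the same route as the paper: both arguments compare the defining $\sup$/$\inf$ expressions for $[A_1,B_1]$ and $[A_2,B_2]$ over the identical set of representations $x=\sum_{i\in N}\alpha_i[x_i,y_i]$ and push the pointwise inequalities from $A_1\subseteq A_2$, $B_1\subseteq B_2$ through $\wedge$, $\vee$, $\min$, $\max$, $\sup$, and $\inf$ by monotonicity. Your write-up is merely more explicit than the paper's about the degenerate case and the separate modulus/argument comparisons.
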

\begin{proof}
Let $x\in\mathbf{V}$. Then
\begin{align*}
\lambda_{[A_1,B_1]}(x)&=\sup\limits_{x=\sum\limits_{i\in N}\alpha_i[x_i,y_i]}\{\min\limits_{i\in N}\{r_{A_1}(x_i)\wedge r_{B_1}(y_i)\}e^{i2\pi\min\limits_{i\in N}\{\omega_{A_1}(x_i)\wedge\omega_{B_1}(y_i)\}}\}\\
&\leq\sup\limits_{x=\sum\limits_{i\in N}\alpha_i[x_i,y_i]}\{\min\limits_{i\in N}\{r_{A_2}(x_i)\wedge r_{B_2}(y_i)\}e^{i2\pi\min\limits_{i\in N}\{\omega_{A_2}(x_i)\wedge\omega_{B_2}(y_i)\}}\}=\lambda_{[A_2,B_2]}(x)
\end{align*}
and
\begin{align*}
\rho_{[A_1,B_1]}(x)&=\inf\limits_{x=\sum\limits_{i\in N}\alpha_i[x_i,y_i]}\{\max\limits_{i\in N}\{\hat{r}_{A_1}(x_i)\vee\hat{r}_{B_1}(y_i)\}e^{i2\pi\max\limits_{i\in N}\{\hat{\omega}_{A_1}(x_i)\vee\hat{\omega}_{B_1}(y_i)\}}\}\\
&\geq\inf\limits_{x=\sum\limits_{i\in N}\alpha_i[x_i,y_i]}\{\max\limits_{i\in N}\{\hat{r}_{A_2}(x_i)\vee\hat{r}_{B_2}(y_i)\}e^{i2\pi\max\limits_{i\in N}\{\hat{\omega}_{A_2}(x_i)\vee\hat{\omega}_{B_2}(y_i)\}}\}=\rho_{[A_2,B_2]}(x)
\end{align*}
\end{proof}
\begin{Theorem}\label{thrm-1}\textnormal{
Let $A_1=(\lambda_{A_1},\rho_{A_1})$, $A_2=(\lambda_{A_2},\rho_{A_2})$ and $B_1=(\lambda_{B_1},\rho_{B_1})$, $B_2=(\lambda_{B_2},\rho_{B_2})$ and $A=(\lambda_A,\rho_A)$, $B=(\lambda_B,\rho_B)$  be CIF sets of $\mathbf{V}$. Then
$[A_1+A_2,B]=[A_1,B]+[A_2,B]$ and $[A,B_1+B_2]=[A,B_1]+[A,B_2]$.}
\end{Theorem}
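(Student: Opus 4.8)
My plan is to prove the single identity $[A_1+A_2,B]=[A_1,B]+[A_2,B]$ and then derive $[A,B_1+B_2]=[A,B_1]+[A,B_2]$ from it by symmetry. The reduction rests on the observation that the bracket product is symmetric, $[A,B]=[B,A]$: a decomposition $x=\sum_{i\in N}\alpha_i[x_i,y_i]$ can be rewritten, after rescaling each scalar to absorb the (graded) antisymmetry of the superbracket, as a decomposition $x=\sum_{i\in N}\alpha_i'[y_i,x_i]$, and since $r_A(x_i)\wedge r_B(y_i)=r_B(y_i)\wedge r_A(x_i)$ (and likewise for the phases and for the non-membership data), the suprema and infima defining $[A,B]$ and $[B,A]$ coincide. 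Granting this, $[A,B_1+B_2]=[B_1+B_2,A]=[B_1,A]+[B_2,A]=[A,B_1]+[A,B_2]$, so it suffices to prove the first identity, which I will do by establishing both inclusions.

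For the inclusion $[A_1,B]+[A_2,B]\subseteq[A_1+A_2,B]$ I would invoke the two preceding lemmas. First note $A_1\subseteq A_1+A_2$ and $A_2\subseteq A_1+A_2$: choosing the decomposition $x=x+0$ in Definition \ref{def-1} and using the standing convention $\lambda_{A_j}(0)=1$, $\rho_{A_j}(0)=0$ gives $\lambda_{A_1+A_2}(x)\geq\lambda_{A_1}(x)$ and $\rho_{A_1+A_2}(x)\leq\rho_{A_1}(x)$, and symmetrically for $A_2$. Then Lemma \ref{lem-1} yields $[A_1,B]\subseteq[A_1+A_2,B]$ and $[A_2,B]\subseteq[A_1+A_2,B]$, and Lemma \ref{lem-2}, applied with $[A_1,B],[A_2,B]$ in the roles of $A_1,A_2$ and $[A_1+A_2,B]$ in the role of $B$, gives $[A_1,B]+[A_2,B]\subseteq[A_1+A_2,B]$.

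The reverse inclusion $[A_1+A_2,B]\subseteq[A_1,B]+[A_2,B]$ is the substantive part. Fix $x$ and a bracket decomposition $x=\sum_{i\in N}\alpha_i[x_i,y_i]$. Using bilinearity of the superbracket I would, for any chosen splittings $x_i=a_i+b_i$, rewrite $\alpha_i[x_i,y_i]=\alpha_i[a_i,y_i]+\alpha_i[b_i,y_i]$, and hence $x=u+v$ with $u=\sum_{i\in N}\alpha_i[a_i,y_i]$ and $v=\sum_{i\in N}\alpha_i[b_i,y_i]$. These exhibit valid bracket decompositions of $u$ and $v$, so $r_{[A_1,B]}(u)\geq\min_i\{r_{A_1}(a_i)\wedge r_B(y_i)\}$ and $r_{[A_2,B]}(v)\geq\min_i\{r_{A_2}(b_i)\wedge r_B(y_i)\}$, with the analogous phase inequalities. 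Working with the modulus component (the phase, and the dual $\inf$/$\max$ non-membership components, being handled by the identical argument), and given $\varepsilon>0$, I would choose for each $i$ a splitting $x_i=a_i+b_i$ with $r_{A_1}(a_i)\wedge r_{A_2}(b_i)>r_{A_1+A_2}(x_i)-\varepsilon$, which is possible since $r_{A_1+A_2}(x_i)=\sup_{x_i=a_i+b_i}\{r_{A_1}(a_i)\wedge r_{A_2}(b_i)\}$. Then the decomposition $x=u+v$ gives
$$r_{[A_1,B]+[A_2,B]}(x)\geq r_{[A_1,B]}(u)\wedge r_{[A_2,B]}(v)\geq\min_i\{r_{A_1+A_2}(x_i)\wedge r_B(y_i)\}-\varepsilon.$$
Letting $\varepsilon\to0$ and taking the supremum over all bracket decompositions of $x$ yields $r_{[A_1,B]+[A_2,B]}(x)\geq r_{[A_1+A_2,B]}(x)$, and together with the first inclusion this forces equality of the modulus components; the remaining three components are identical in form.

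I expect the main obstacle to be precisely this $\varepsilon$-approximation against the supremum hidden inside $r_{A_1+A_2}(x_i)$: a single bracket decomposition cannot be split so as to attain the defining value exactly, so the argument must pass through near-optimal splittings and a limit. A secondary point to monitor is the consistency between the modulus and phase components; here the standing homogeneity hypothesis on $A_1,A_2$, together with the fact that both the sum and the bracket product are computed componentwise in $r$ and $\omega$, is what guarantees that the separate component arguments assemble into a genuine identity of complex intuitionistic fuzzy sets.
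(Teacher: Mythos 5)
Your proof of the first identity is essentially sound, and on the easy inclusion it coincides exactly with the paper: both establish $A_1\subseteq A_1+A_2$ and $A_2\subseteq A_1+A_2$ via the decomposition $x=x+0$ and the normalization at $0$, then apply Lemma~\ref{lem-1} followed by Lemma~\ref{lem-2}. Where you genuinely diverge is the substantive inclusion $[A_1+A_2,B]\subseteq[A_1,B]+[A_2,B]$: the paper merely unfolds the definitions of $r_{[A_1+A_2,B]}$ and $\hat r_{[A_1+A_2,B]}$ component by component and then cites the corresponding real-valued result from the Chen--Zhang reference, whereas you supply the actual argument --- near-optimal splittings $x_i=a_i+b_i$ against the supremum inside $r_{A_1+A_2}(x_i)$, bilinearity of the superbracket to produce $x=u+v$ with $u=\sum_i\alpha_i[a_i,y_i]$ and $v=\sum_i\alpha_i[b_i,y_i]$, and an $\varepsilon\to 0$ limit. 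That buys self-containedness, and your remark that the homogeneity hypothesis is what lets the modulus and phase components be approximated by a common splitting is a point the paper glosses over entirely.

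The one step I would not let stand is the reduction of $[A,B_1+B_2]=[A,B_1]+[A,B_2]$ to the first identity via $[A,B]=[B,A]$. Your justification --- rescale each $\alpha_i$ to absorb the graded antisymmetry and rewrite $\alpha_i[x_i,y_i]$ as $\alpha_i'[y_i,x_i]$ --- only works when $x_i$ and $y_i$ are $\mathbb{Z}_2$-homogeneous; for general $x_i,y_i\in\mathbf{V}$ the superbracket $[x_i,y_i]$ decomposes as a signed sum of four terms and is not a scalar multiple of $[y_i,x_i]$, so the two families of decompositions of $x$ do not match up. The theorem here is stated for arbitrary CIF sets, and indeed the paper only proves $[A,B]=[B,A]$ later (Lemma~\ref{lem-5}) under the additional hypothesis that $A$ and $B$ are $\mathbb{Z}_2$-graded CIF vector subspaces, precisely because the grading is needed. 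The repair is immediate and costs nothing: drop the symmetry detour and run your two-inclusion argument verbatim in the second slot, splitting $y_i=a_i+b_i$ instead of $x_i$.
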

\begin{proof}
Let $x\in \mathbf{V}$. If we define $\overline{\sup}$ by $\overline{\sup}=\sup\limits_{x=\sum\limits_{i\in N}\alpha_i[x_i,y_i]}$. Then
\begin{align*}
\lambda_{[A_1+A_2,B]}(x)&=\sup\limits_{x=\sum\limits_{i\in N}\alpha_i[x_i,y_i]}\{\min\limits_{i\in N}\{r_{A_1+A_2}(x_i)\wedge r_B(y_i)\}e^{i2\pi\min\limits_{i\in N}\{\omega_{A_1+A_2}(x_i)\wedge\omega_B(y_i)\}}\}\\
&=\overline{\sup}\{\min\limits_{i\in N}\{\sup_{x_i=a_i+b_i}\{r_{A_1}(a_i)\wedge r_{A_2}(b_i)\}\wedge r_B(y_i)\}e^{i2\pi\min\limits_{i\in N}\{\sup\limits_{x_i=a_i+b_i}\{\omega_{A_1}(a_i)\wedge \omega_{A_2}(b_i)\}\wedge\omega_B(y_i)\}}\}\\
&=\overline{\sup}\{\min\limits_{i\in N}\{\sup_{x_i=a_i+b_i}\{r_{A_1}(a_i)\wedge r_{A_2}(b_i)\wedge r_B(y_i)\}\}e^{i2\pi\min\limits_{i\in N}\{\sup\limits_{x_i=a_i+b_i}\{\omega_{A_1}(a_i)\wedge \omega_{A_2}(b_i)\wedge\omega_B(y_i)\}\}}\}\\
&=\overline{\sup}\{\min\limits_{i\in N}\{\sup_{x_i=a_i+b_i}\{r_{A_1}(a_i)\wedge r_{A_2}(b_i)\wedge r_B(y_i)\}\}\}e^{i2\pi\overline{\sup}\{\min\limits_{i\in N}\{\sup\limits_{x_i=a_i+b_i}\{\omega_{A_1}(a_i)\wedge \omega_{A_2}(b_i)\wedge\omega_B(y_i)\}\}\}}\\
&=r_{[A_1+A_2,B]}(x)e^{i2\pi\omega_{[A_1+A_2,B]}(x)}\ \ \ \ ({\rm by}\ [10]).
\end{align*}
Since $A_1+A_2$ is homogenous with $B$, then by \cite{CZ} we have that $$r_{[A_1+A_2,B]}(x)\leq r_{[A_1,B]+[A_2,B]}(x)\ \ \ {\rm and}\ \ \
\omega_{[A_1+A_2,B]}(x)\leq \omega_{[A_1,B]+[A_2,B]}(x),$$ hence $\lambda_{[A_1+A_2,B]}(x)\leq\lambda_{[A_1,B]+[A_2,B]}(x).$\\ \\
Also if we define $\overline{\inf}$ by $\overline{\inf}=\inf\limits_{x=\sum\limits_{i\in N}\alpha_i[x_i,y_i]}$. Then
\begin{align*}
\rho_{[A_1+A_2,B]}(x)&=\inf\limits_{x=\sum\limits_{i\in N}\alpha_i[x_i,y_i]}\{\max\limits_{i\in N}\{\hat{r}_{A_1+A_2}(x_i)\vee\hat{r}_B(y_i)\}e^{i2\pi\max\limits_{i\in N}\{\hat{\omega}_{A_1+A_2}(x_i)\vee\hat{\omega}_B(y_i)\}}\}\\
&=\overline{\inf}\{\max\limits_{i\in N}\{\inf_{x_i=a_i+b_i}\{\hat{r}_{A_1}(a_i)\vee \hat{r}_{A_2}(b_i)\}\vee\hat{r}_B(y_i)\}e^{i2\pi\max\limits_{i\in N}\{\inf\limits_{x_i=a_i+b_i}\{\hat{\omega}_{A_1}(a_i)\vee \hat{\omega}_{A_2}(b_i)\}\vee\hat{\omega}_B(y_i)\}}\}\\
&=\overline{\inf}\{\max\limits_{i\in N}\{\inf_{x_i=a_i+b_i}\{\hat{r}_{A_1}(a_i)\vee \hat{r}_{A_2}(b_i)\vee\hat{r}_B(y_i)\}\}e^{i2\pi\max\limits_{i\in N}\{\inf\limits_{x_i=a_i+b_i}\{\hat{\omega}_{A_1}(a_i)\vee \hat{\omega}_{A_2}(b_i)\vee\hat{\omega}_B(y_i)\}\}}\}\\
&=\overline{\inf}\{\max\limits_{i\in N}\{\inf_{x_i=a_i+b_i}\{\hat{r}_{A_1}(a_i)\vee \hat{r}_{A_2}(b_i)\vee\hat{r}_B(y_i)\}\}\}e^{i2\pi\overline{\inf}\{\max\limits_{i\in N}\{\inf\limits_{x_i=a_i+b_i}\{\hat{\omega}_{A_1}(a_i)\vee \hat{\omega}_{A_2}(b_i)\vee\hat{\omega}_B(y_i)\}\}\}}\\
&=\hat{r}_{[A_1+A_2,B]}(x)e^{i2\pi\hat{\omega}_{[A_1+A_2,B]}(x)}\ \ \ \ ({\rm by}\ [10]).
\end{align*}
Again since $A_1+A_2$ is homogenous with $B$, then by \cite{CZ} we have that $$\hat{r}_{[A_1+A_2,B]}(x)\geq\hat{r}_{[A_1,B]+[A_2,B]}(x)\ \ \ {\rm and}\ \ \
\hat{\omega}_{[A_1+A_2,B]}(x)\geq\hat{\omega}_{[A_1,B]+[A_2,B]}(x),$$ hence $\rho_{[A_1+A_2,B]}(x)\geq\rho_{[A_1,B]+[A_2,B]}(x).$ This shows that $[A_1+A_2,B]\subseteq[A_1,B]+[A_2,B].$\\
Let $x\in\mathbf{V}$ we have
\begin{align*}
\lambda_{A_1+A_2}(x)&=r_{A_1+A_2}(x)e^{i2\pi\omega_{A_1+A_2}(x)}\\
&=\sup_{x=a+b}\{(r_{A_1}(a)\wedge r_{A_2}(b))\}e^{i2\pi\sup_{x=a+b}\{ (\omega_{A_1}(a)\wedge\omega_{A_2}(b))\}}
\end{align*}
and $r_{A_1+A_2}(x)=\sup\limits_{x=a+b}\{(r_{A_1}(a)\wedge r_{A_2}(b))\}\geq r_{A_1}(x)\wedge r_{A_2}(0)=r_{A_1}(x)$ and $\omega_{A_1+A_2}(x)=\sup\limits_{x=a+b}\{ \omega_{A_1}(a)\wedge\omega_{A_2}(b)\}\geq\omega_{A_1}(x)\wedge\omega_{A_2}(0)=\omega_{A_1}(x).$ Thus, $\lambda_{A_1+A_2}(x)=r_{A_1+A_2}(x)e^{i2\pi\omega_{A_1+A_2}(x)}\geq r_{A_1}(x)e^{i2\pi\omega_{A_1}(x)}=\lambda_{A_1}(x)$, and
\begin{align*}
\rho_{A_1+A_2}(x)&=\hat{r}_{A_1+A_2}(x)e^{i2\pi\hat{\omega}_{A_1+A_2}(x)}\\
&=\inf_{x=a+b}\{(\hat{r}_{A_1}(a)\vee\hat{r}_{A_2}(b))\}e^{i2\pi\inf_{x=a+b}\{ (\hat{\omega}_{A_1}(a)\vee\hat{\omega}_{A_2}(b))\}}
\end{align*}
and $\hat{r}_{A_1+A_2}(x)=\inf\limits_{x=a+b}\{(\hat{r}_{A_1}(a)\vee\hat{r}_{A_2}(b))\}\leq \hat{r}_{A_1}(x)\vee\hat{r}_{A_2}(0)=\hat{r}_{A_1}(x)$ and $\hat{\omega}_{A_1+A_2}(x)=\inf\limits_{x=a+b}\{ \hat{\omega}_{A_1}(a)\vee\hat{\omega}_{A_2}(b)\}\leq\hat{\omega}_{A_1}(x)\vee\hat{\omega}_{A_2}(0)=\hat{\omega}_{A_1}(x).$
Thus, $\rho_{A_1+A_2}(x)=\hat{r}_{A_1+A_2}(x)e^{i2\pi\hat{\omega}_{A_1+A_2}(x)}\leq \hat{r}_{A_1}(x)e^{i2\pi\hat{\omega}_{A_1}(x)}=\rho_{A_1}(x)$. Hence $A_1\subseteq A_1+A_2$. Similarly, $A_2\subseteq A_1+A_2$. By Lemma~\ref{lem-1}, we have $[A_1,B]\subseteq[A_1+A_2,B]$, $[A_2,B]\subseteq[A_1+A_2,B]$. So, by Lemma~\ref{lem-2}, $[A_1,B]+[A_2,B]\subseteq[A_1+A_2,B]$. Hence we have $[A_1+A_2,B]=[A_1,B]+[A_2,B]$.
\end{proof}
\begin{Theorem}\label{thrm-2}\textnormal{
Let $A=(\lambda_A,\rho_A)$, $B=(\lambda_B,\rho_B)$ be CIF subspaces of $\mathbf{V}$. Then for any $\alpha\in K$ we have $[\alpha A,B]=\alpha[A,B]$ and $[A,\alpha B]=\alpha[A,B]$.}
\end{Theorem}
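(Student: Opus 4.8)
The plan is to reduce both identities to the $K$-bilinearity of the Lie (super)bracket, which gives $\alpha[u,v]=[\alpha u,v]=[u,\alpha v]$ for all $u,v\in\mathbf{V}$, $\alpha\in K$, combined with the defining formula for the scalar multiple $\alpha A$ (for $\alpha\neq 0$ this is $r_{\alpha A}(u)=r_A(\alpha^{-1}u)$, $\omega_{\alpha A}(u)=\omega_A(\alpha^{-1}u)$, and likewise for the non-membership data). I would prove $[\alpha A,B]=\alpha[A,B]$ by treating $\alpha\neq 0$ first via a reindexing bijection, then disposing of the degenerate case $\alpha=0$ by direct computation; the companion identity $[A,\alpha B]=\alpha[A,B]$ follows by the same argument applied to the second slot.

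For $\alpha\neq 0$ and a fixed $x\in\mathbf{V}$, I would start from an arbitrary representation $x=\sum_{i\in N}\beta_i[u_i,v_i]$ and apply $\alpha^{-1}$ to both sides; by bilinearity this gives $\alpha^{-1}x=\sum_{i\in N}\beta_i[\alpha^{-1}u_i,v_i]$, so putting $p_i=\alpha^{-1}u_i$, $q_i=v_i$ yields a representation of $\alpha^{-1}x$. Multiplying a representation of $\alpha^{-1}x$ by $\alpha$ and setting $u_i=\alpha p_i$ reverses this, so $u_i\mapsto\alpha^{-1}u_i$ is a bijection between the families of representations of $x$ appearing in $\lambda_{[\alpha A,B]}(x)$ and those of $\alpha^{-1}x$ appearing in $\lambda_{[A,B]}(\alpha^{-1}x)$. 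Since $r_{\alpha A}(u_i)=r_A(\alpha^{-1}u_i)=r_A(p_i)$ and $\omega_{\alpha A}(u_i)=\omega_A(\alpha^{-1}u_i)=\omega_A(p_i)$ while $v_i=q_i$ is untouched, the expression $\min_{i}\{r_{\alpha A}(u_i)\wedge r_B(v_i)\}\,e^{i2\pi\min_i\{\omega_{\alpha A}(u_i)\wedge\omega_B(v_i)\}}$ is identical, term-by-term, to its image under the bijection. Taking suprema over the matched representations gives $\lambda_{[\alpha A,B]}(x)=\lambda_{[A,B]}(\alpha^{-1}x)=\lambda_{\alpha[A,B]}(x)$, and the same bijection with $\inf/\max$ replacing $\sup/\min$ yields $\rho_{[\alpha A,B]}(x)=\rho_{\alpha[A,B]}(x)$.

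For $\alpha=0$ I would compute both sides directly. By definition $0\cdot[A,B]$ is the CIF set concentrated at $0$. For $[0A,B]$: if $x\neq 0$ then every representation $x=\sum_i\beta_i[u_i,v_i]$ has some $u_i\neq 0$ (otherwise the sum is $0$), whence $r_{0A}(u_i)=0$ forces the inner minimum to vanish, so $\lambda_{[0A,B]}(x)=0$, and dually $\hat r_{0A}(u_i)=1$ forces $\rho_{[0A,B]}(x)=1$; at $x=0$ the representation $0=[0,0]$ gives $r_{0A}(0)\wedge r_B(0)=1$ and $\hat r_{0A}(0)\vee\hat r_B(0)=0$, so $\lambda_{[0A,B]}(0)=1$, $\rho_{[0A,B]}(0)=0$. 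Hence $[0A,B]=0[A,B]$, completing the first identity, and the second is proved verbatim using $[u,\alpha v]=\alpha[u,v]$ on the second argument.

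The only genuine subtlety is confirming that the reindexing $u_i\mapsto\alpha^{-1}u_i$ is a bona fide bijection between representations that preserves \emph{both} the amplitude minimum and the phase minimum simultaneously; once this is verified the two suprema (resp.\ infima) range over literally the same set of complex values and equality is immediate. Keeping the cases $\alpha\neq 0$ and $\alpha=0$ separate is what legitimizes the use of $\alpha^{-1}$ throughout, so no further compatibility check is required.
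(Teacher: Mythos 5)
Your proposal is correct and follows essentially the same route as the paper: for $\alpha\neq 0$ the reindexing $x=\sum_i\beta_i[u_i,v_i]\mapsto \alpha^{-1}x=\sum_i\beta_i[\alpha^{-1}u_i,v_i]$ is exactly the substitution the paper performs inside the supremum/infimum, and your treatment of the degenerate case $\alpha=0$ (forcing the minimum to $0$ and the maximum to $1$ when $x\neq 0$, and checking $x=0$ directly) matches the paper's argument. No substantive differences to report.
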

\begin{proof}
Let $0\not=\alpha\in K$ and $x\in\mathbf{V}$. Then
\begin{align*}
\lambda_{[\alpha A,B]}(x)&=\sup\limits_{x=\sum\limits_{i\in N}\alpha_i[x_i,y_i]}\{\min\limits_{i\in N}\{r_{\alpha A}(x_i)\wedge r_B(y_i)\}e^{i2\pi\min\limits_{i\in N}\{\omega_{\alpha A}(x_i)\wedge\omega_B(y_i)\}}\}\\
&=\sup\limits_{x=\sum\limits_{i\in N}\alpha_i[x_i,y_i]}\{\min\limits_{i\in N}\{r_A(\alpha^{-1}x_i)\wedge r_B(y_i)\}e^{i2\pi\min\limits_{i\in N}\{\omega_A(\alpha^{-1}x_i)\wedge\omega_B(y_i)\}}\}\\
&=\sup\limits_{x=\sum\limits_{i\in N}\alpha\alpha_i[\alpha^{-1}x_i,y_i]}\{\min\limits_{i\in N}\{r_A(\alpha^{-1}x_i)\wedge r_B(y_i)\}e^{i2\pi\min\limits_{i\in N}\{\omega_A(\alpha^{-1}x_i)\wedge\omega_B(y_i)\}}\}\\
&=\lambda_{[A,B]}(\alpha^{-1}x)=\lambda_{\alpha[A,B]}(x)
\end{align*}
and
\begin{align*}
\rho_{[\alpha A,B]}(x)&=\inf\limits_{x=\sum\limits_{i\in N}\alpha_i[x_i,y_i]}\{\max\limits_{i\in N}\{\hat{r}_{\alpha A}(x_i)\vee\hat{r}_B(y_i)\}e^{i2\pi\max\limits_{i\in N}\{\hat{\omega}_{\alpha A}(x_i)\vee\hat{\omega}_B(y_i)\}}\}\\
&=\inf\limits_{x=\sum\limits_{i\in N}\alpha_i[x_i,y_i]}\{\max\limits_{i\in N}\{\hat{r}_A(\alpha^{-1}x_i)\vee\hat{r}_B(y_i)\}e^{i2\pi\max\limits_{i\in N}\{\hat{\omega}_A(\alpha^{-1}x_i)\vee\hat{\omega}_B(y_i)\}}\}\\
&=\inf\limits_{x=\sum\limits_{i\in N}\alpha\alpha_i[\alpha^{-1}x_i,y_i]}\{\max\limits_{i\in N}\{\hat{r}_A(\alpha^{-1}x_i)\vee\hat{r}_B(y_i)\}e^{i2\pi\max\limits_{i\in N}\{\hat{\omega}_A(\alpha^{-1}x_i)\vee\hat{\omega}_B(y_i)\}}\}\\
&=\rho_{[A,B]}(\alpha^{-1}x)=\rho_{\alpha[A,B]}(x).
\end{align*}
If $\alpha=0$, $x\not=0$, recall that $$\lambda_{[\alpha A,B]}(x)=\sup\limits_{x=\sum\limits_{i\in N}\alpha_i[x_i,y_i]}\{\min\limits_{i\in N}\{r_{\alpha A}(x_i)\wedge r_B(y_i)\}e^{i2\pi\min\limits_{i\in N}\{\omega_{\alpha A}(x_i)\wedge\omega_B(y_i)\}}\}$$ and
$$\rho_{[\alpha A,B]}(x)=\inf\limits_{x=\sum\limits_{i\in N}\alpha_i[x_i,y_i]}\{\max\limits_{i\in N}\{\hat{r}_{\alpha A}(x_i)\vee\hat{r}_B(y_i)\}e^{i2\pi\max\limits_{i\in N}\{\hat{\omega}_{\alpha A}(x_i)\vee\hat{\omega}_B(y_i)\}}\},$$ there exists $x_i\not=0$, which implies that $r_{\alpha A}(x_i)=0$, $\omega_{\alpha A}(x_i)=0$ and $\hat{r}_{\alpha A}(x_i)=1$, $\hat{\omega}_{\alpha A}(x_i)=1$. So, $\lambda_{[\alpha A,B]}(x)=0$, $\rho_{[\alpha A,B]}(x)=1$. If $\alpha=0$, $x=0$, it is obvious. So $[\alpha A,B]=\alpha[A,B]$. The second one can be obtained in the same way.
\end{proof}
In the following theorem we show that the complex intuitionistic fuzzy bracket product $[,]$ remains bilinear.
\begin{Theorem}\label{thrm-9}\textnormal{
Let $A_1=(\lambda_{A_1},\rho_{A_1})$, $A_2=(\lambda_{A_2},\rho_{A_2})$ and $B_1=(\lambda_{B_1},\rho_{B_1})$, $B_2=(\lambda_{B_2},\rho_{B_2})$ and $A=(\lambda_A,\rho_A)$, $B=(\lambda_B,\rho_B)$  be CIF subspaces of $\mathbf{V}$. Then for any $\alpha,\beta\in K$, we have
$[\alpha A_1+\beta A_2,B]=\alpha[A_1,B]+\beta[A_2,B]$\\
$[A,\alpha B_1+\beta B_2]=\alpha[A,B_1]+\beta[A,B_2]$.}
\end{Theorem}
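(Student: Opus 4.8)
The plan is to deduce this bilinearity statement directly from the additivity result of Theorem~\ref{thrm-1} together with the scalar-homogeneity result of Theorem~\ref{thrm-2}, so that essentially no new computation with the suprema and infima defining $[\cdot,\cdot]$ is required. I would first establish the identity $[\alpha A_1+\beta A_2,B]=\alpha[A_1,B]+\beta[A_2,B]$ and then observe that the second identity follows by the symmetric argument applied to the second slot of the bracket.

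For the first identity, I would apply Theorem~\ref{thrm-1} taking the two CIF sets in the first slot to be $\alpha A_1$ and $\beta A_2$. Since under the standing assumption of Section~3 any two CIF sets of $\mathbf{V}$ are homogeneous, the hypotheses of Theorem~\ref{thrm-1} are met, and we obtain
\begin{equation*}
[\alpha A_1+\beta A_2,B]=[\alpha A_1,B]+[\beta A_2,B].
\end{equation*}
I would then apply Theorem~\ref{thrm-2} to each summand, which gives $[\alpha A_1,B]=\alpha[A_1,B]$ and $[\beta A_2,B]=\beta[A_2,B]$. Substituting these into the previous display yields $[\alpha A_1+\beta A_2,B]=\alpha[A_1,B]+\beta[A_2,B]$, as desired. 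The second identity $[A,\alpha B_1+\beta B_2]=\alpha[A,B_1]+\beta[A,B_2]$ is obtained verbatim, now invoking the second halves of Theorems~\ref{thrm-1} and~\ref{thrm-2}, namely additivity and homogeneity in the second argument.

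Before invoking the two theorems one should verify that $\alpha A_1$ and $\beta A_2$ are again CIF subspaces of $\mathbf{V}$, so that they are legitimate arguments of both the bracket and the complex intuitionistic sum; this is immediate from the definition of $\alpha A$. The degenerate case $\alpha=0$ (or $\beta=0$) is already absorbed by Theorem~\ref{thrm-2}, whose proof treats the $\alpha=0$ case separately, so no extra argument is needed here.

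The main (and essentially only) obstacle I anticipate is bookkeeping rather than substance: one must ensure that the sums $\alpha A_1+\beta A_2$ and $\alpha[A_1,B]+\beta[A_2,B]$ are well defined, i.e.\ that the homogeneity hypotheses required to form the CIF sum hold. These are guaranteed by the blanket homogeneity convention adopted at the beginning of Section~3, so the proof reduces to the clean two-line chaining of the previous two theorems.
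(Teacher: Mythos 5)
Your proposal is correct and follows exactly the route the paper takes: its entire proof of Theorem~\ref{thrm-9} is the one-line observation that the result follows from Theorem~\ref{thrm-1} (additivity) and Theorem~\ref{thrm-2} (scalar homogeneity), which is precisely the chaining you spell out. Your additional remarks on checking that $\alpha A_1$, $\beta A_2$ remain CIF subspaces and on the blanket homogeneity convention only make explicit what the paper leaves implicit.
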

\begin{proof}
The results follow from Theorem~\ref{thrm-1} and Theorem~\ref{thrm-2}.
\end{proof}
\begin{lemma}\label{lem-3}\textnormal{
Let $A=(\lambda_A,\rho_A)$ and $B=(\lambda_B,\rho_B)$  be CIF vector subspaces of $\mathbf{V}$. Then $[A,B]$ is a CIF vector subspace of $\mathbf{V}$.}
\end{lemma}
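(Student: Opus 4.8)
The plan is to verify directly that $[A,B]=(\lambda_{[A,B]},\rho_{[A,B]})$ satisfies the two defining conditions of a CIF vector subspace, namely $\lambda_{[A,B]}(x+y)\geq\lambda_{[A,B]}(x)\wedge\lambda_{[A,B]}(y)$ and $\rho_{[A,B]}(x+y)\leq\rho_{[A,B]}(x)\vee\rho_{[A,B]}(y)$, together with $\lambda_{[A,B]}(\alpha x)\geq\lambda_{[A,B]}(x)$ and $\rho_{[A,B]}(\alpha x)\leq\rho_{[A,B]}(x)$ for all $x,y\in\mathbf{V}$ and $\alpha\in K$. Throughout I would exploit the standing hypothesis that $A$ is homogeneous with $B$, which (exactly as used in the proof of Theorem~\ref{thrm-1}) lets me separate modulus and argument and write $\lambda_{[A,B]}(x)=r_{[A,B]}(x)e^{i2\pi\omega_{[A,B]}(x)}$ with $r_{[A,B]}(x)=\sup\min_i\{r_A(x_i)\wedge r_B(y_i)\}$ and $\omega_{[A,B]}(x)=\sup\min_i\{\omega_A(x_i)\wedge\omega_B(y_i)\}$, the suprema running over all representations $x=\sum_{i\in N}\alpha_i[x_i,y_i]$, and analogously for $\hat{r}_{[A,B]}$ and $\hat{\omega}_{[A,B]}$ with $\inf$ and $\vee$. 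This reduces each complex inequality to a pair of real inequalities on the modulus and argument coordinates, which are handled identically.

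For the additivity condition, first note that if $x$ or $y$ admits no expression as a sum of brackets then $\lambda_{[A,B]}(x)\wedge\lambda_{[A,B]}(y)=0$ and the inequality is trivial, and dually for $\rho$. When both are expressible, the key observation is that concatenating a representation $x=\sum_i\alpha_i[x_i,y_i]$ with a representation $y=\sum_j\beta_j[u_j,v_j]$ produces a representation of $x+y$ whose index-wise minimum equals the minimum of the two separate minima. Taking suprema over the two families of representations independently and using the identity $\sup_c\min(c,d)=\min(\sup c,d)$ then gives $r_{[A,B]}(x+y)\geq r_{[A,B]}(x)\wedge r_{[A,B]}(y)$; the same argument on the argument coordinate bounds $\omega_{[A,B]}$, so $\lambda_{[A,B]}(x+y)\geq\lambda_{[A,B]}(x)\wedge\lambda_{[A,B]}(y)$. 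For $\rho_{[A,B]}$ the identical concatenation argument, now with $\max$ in place of $\min$ and the identity $\inf_c\max(c,d)=\max(\inf c,d)$, yields $\rho_{[A,B]}(x+y)\leq\rho_{[A,B]}(x)\vee\rho_{[A,B]}(y)$.

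For the scalar condition, treat $\alpha=0$ separately: then $\alpha x=0$, and since $\lambda_{[A,B]}(0)=1$ and $\rho_{[A,B]}(0)=0$ are the extreme values, both inequalities hold automatically. For $\alpha\neq0$, the map $x=\sum_i\alpha_i[x_i,y_i]\mapsto\alpha x=\sum_i(\alpha\alpha_i)[x_i,y_i]$ is a bijection between the representations of $x$ and those of $\alpha x$ that leaves every $x_i,y_i$ (and hence every min and max occurring above) unchanged; consequently $r_{[A,B]}(\alpha x)=r_{[A,B]}(x)$, $\omega_{[A,B]}(\alpha x)=\omega_{[A,B]}(x)$, and likewise for the non-membership parts, so in fact $\lambda_{[A,B]}(\alpha x)=\lambda_{[A,B]}(x)$ and $\rho_{[A,B]}(\alpha x)=\rho_{[A,B]}(x)$, which is stronger than required. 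I expect the only delicate points to be the justification of the modulus/argument separation and the interchange of $\min$ and $\max$ with $\sup$ and $\inf$, both resting on the homogeneity hypothesis; the edge cases in which a vector is not a sum of brackets should also be stated explicitly so that the ``otherwise'' branches of the definition are correctly matched.
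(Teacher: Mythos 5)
Your proposal is correct and rests on the same key idea as the paper's proof: concatenating a bracket representation of $x$ with one of $y$ to obtain a representation of $x+y$, and comparing suprema of the index-wise minima (the paper phrases this as a contradiction argument with an intermediate threshold $t$ rather than via the identity $\sup(a\wedge b)=\sup a\wedge\sup b$, but the content is identical). Your treatment of the scalar condition, which the paper leaves as ``easily shown,'' and of the edge cases where a vector admits no bracket representation, is a correct and slightly more explicit version of what the paper intends.
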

\begin{proof}
For any $x,y\in\mathbf{V}$ and $\alpha\in K$.\\
(1) Suppose that $\lambda_{[A,B]}(x+y)=r_{[A,B]}(x+y)e^{i2\pi\omega_{[A,B]}(x+y)}<\lambda_{[A,B]}(x)\wedge\lambda_{[A,B]}(y)$, since $$\lambda_{[A,B]}(x)=r_{[A,B]}(x)e^{i2\pi\omega_{[A,B]}(x)}\ \ \ {\rm and}\ \ \ \  \lambda_{[A,B]}(y)=r_{[A,B]}(y)e^{i2\pi\omega_{[A,B]}(y)},$$ we have that
\begin{align*}
r_{[A,B]}(x+y)e^{i2\pi\omega_{[A,B]}(x+y)}<&r_{[A,B]}(x)e^{i2\pi\omega_{[A,B]}(x)}\wedge r_{[A,B]}(y)e^{i2\pi\omega_{[A,B]}(y)}\\
=&(r_{[A,B]}(x)\wedge r_{[A,B]}(y))e^{i2\pi(\omega_{[A,B]}(x)\wedge\omega_{[A,B]}(y))}.
\end{align*}
So we have that $r_{[A,B]}(x+y)<r_{[A,B]}(x)\wedge r_{[A,B]}(y)$ or $\omega_{[A,B]}(x+y)<\omega_{[A,B]}(x)\wedge\omega_{[A,B]}(y)$. If $r_{[A,B]}(x+y)<r_{[A,B]}(x)\wedge r_{[A,B]}(y)$, choose a number $t\in[0,1]$ such that $r_{[A,B]}(x+y)<t<r_{[A,B]}(x)$ and $r_{[A,B]}(x+y)<t<r_{[A,B]}(y)$, then there exist $x_i,y_i,z_j,w_j\in\mathbf{V}$ such that $x=\sum_{i\in\mathbb{N}}\alpha_i[x_i,y_i]$, $y=\sum_{j\in\mathbb{N}'}\alpha_j[z_j,w_j]$ and for all $i\in\mathbb{N}$, $j\in\mathbb{N}'$, we have $r_A(x_i)>t$, $r_B(y_i)>t$, $r_A(z_j)>t$, $r_B(w_j)>t$. Since $x+y=\sum_{i\in\mathbb{N}}\alpha_i[x_i,y_i]+\sum_{j\in\mathbb{N}'}\alpha_j[z_j,w_j]$, we get $r_{[A,B]}(x+y)=r_{[A,B]}(\sum_{i\in\mathbb{N}}\alpha_i[x_i,y_i]+\sum_{j\in\mathbb{N}'}\alpha_j[z_j,w_j])>t>r_{[A,B]}(x+y)$. This is a contradiction. The other case can be proved similarly, so $\lambda_{[A,B]}(x+y)\geq\lambda_{[A,B]}(x)\wedge\lambda_{[A,B]}(y)$.

Similarly, suppose that $\rho_{[A,B]}(x+y)=\hat{r}_{[A,B]}(x+y)e^{i2\pi\hat{\omega}_{[A,B]}(x+y)}>\rho_{[A,B]}(x)\vee\rho_{[A,B]}(y)$, since $$\rho_{[A,B]}(x)=\hat{r}_{[A,B]}(x)e^{i2\pi\hat{\omega}_{[A,B]}(x)}\ \ \ {\rm and}\ \ \ \  \rho_{[A,B]}(y)=\hat{r}_{[A,B]}(y)e^{i2\pi\hat{\omega}_{[A,B]}(y)},$$ we have that
\begin{align*}
\hat{r}_{[A,B]}(x+y)e^{i2\pi\hat{\omega}_{[A,B]}(x+y)}>&\hat{r}_{[A,B]}(x)e^{i2\pi\hat{\omega}_{[A,B]}(x)}\vee \hat{r}_{[A,B]}(y)e^{i2\pi\hat{\omega}_{[A,B]}(y)}\\
=&(\hat{r}_{[A,B]}(x)\vee\hat{r}_{[A,B]}(y))e^{i2\pi(\hat{\omega}_{[A,B]}(x)\vee\hat{\omega}_{[A,B]}(y))}.
\end{align*}
So we have that $\hat{r}_{[A,B]}(x+y)>\hat{r}_{[A,B]}(x)\vee\hat{r}_{[A,B]}(y)$ or $\hat{\omega}_{[A,B]}(x+y)>\hat{\omega}_{[A,B]}(x)\vee\hat{\omega}_{[A,B]}(y)$. If $\hat{r}_{[A,B]}(x+y)>\hat{r}_{[A,B]}(x)\vee\hat{r}_{[A,B]}(y)$, choose a number $t\in[0,1]$ such that $\hat{r}_{[A,B]}(x+y)>t>\hat{r}_{[A,B]}(x)$ and $\hat{r}_{[A,B]}(x+y)>t>\hat{r}_{[A,B]}(y)$, then there exist $x_i,y_i,z_j,w_j\in\mathbf{V}$ such that $x=\sum_{i\in\mathbb{N}}\alpha_i[x_i,y_i]$, $y=\sum_{j\in\mathbb{N}'}\alpha_j[z_j,w_j]$ and for all $i\in\mathbb{N}$, $j\in\mathbb{N}'$, we have $\hat{r}_A(x_i)<t$, $\hat{r}_B(y_i)<t$, $\hat{r}_A(z_j)<t$, $\hat{r}_B(w_j)<t$. Since $x+y=\sum_{i\in\mathbb{N}}\alpha_i[x_i,y_i]+\sum_{j\in\mathbb{N}'}\alpha_j[z_j,w_j]$, we get $\hat{r}_{[A,B]}(x+y)=\hat{r}_{[A,B]}(\sum_{i\in\mathbb{N}}\alpha_i[x_i,y_i]+\sum_{j\in\mathbb{N}'}\alpha_j[z_j,w_j])<t<\hat{r}_{[A,B]}(x+y)$. This is a contradiction. The other case can be proved similarly, so $\rho_{[A,B]}(x+y)\leq\rho_{[A,B]}(x)\vee\rho_{[A,B]}(y)$.\\
(2) By using the definition of $\lambda_{[A,B]}(x)$ and $\rho_{[A,B]}(x)$, we can easily show that $\lambda_{[A,B]}(\alpha x)\geq\lambda_{[A,B]}(x)$ and $\rho_{[A,B]}(\alpha x)\leq\rho_{[A,B]}(x)$.
\end{proof}
Let $A=(\lambda_A,\rho_A)$ and $B=(\lambda_B,\rho_B)$  be $\mathbb{Z}_2$-graded CIF vector subspaces of $\mathbf{V}$. Then $A=A_0\oplus A_1$ and  $B=B_0\oplus B_1$, where $A_0$, $B_0$ are CIF vector subspaces of $\mathbf{V}_0$ and $A_1$, $B_1$ are CIF vector subspaces of $\mathbf{V}_1$. Define $[A_\alpha,B_\beta]=(\lambda_{[A_\alpha,B_\beta]},\rho_{[A_\alpha,B_\beta]}),$ where $$\lambda_{[A_\alpha,B_\beta]}=\sup\limits_{x=\sum\limits_{i\in N}\alpha_i[x_i,y_i]}\{\min\limits_{i\in N}\{r_{A_\alpha}(x_i)\wedge r_{B_\beta}(y_i)\}e^{i2\pi\min\limits_{i\in N}\{\omega_{A_\alpha}(x_i)\wedge\omega_{B_\beta}(y_i)\}}\}$$ and
$$\rho_{[A_\alpha,B_\beta]}=\inf\limits_{x=\sum\limits_{i\in N}\alpha_i[x_i,y_i]}\{\max\limits_{i\in N}\{\hat{r}_{A_\alpha}(x_i)\vee\hat{r}_{B_\beta}(y_i)\}e^{i2\pi\max\limits_{i\in N}\{\hat{\omega}_{A_\alpha}(x_i)\vee\hat{\omega}_{B_\beta}(y_i)\}}\},$$ for $x_i\in\mathbf{V}_\alpha$, $y_i\in\mathbf{V}_\beta$ and $\alpha,\beta\in\mathbb{Z}_2$.\\
Note that, by Lemma~\ref{lem-3}, $[A_0,B_0]+[A_1,B_1]$ is a CIF vector subspace of $\mathbf{V}_0$ and $[A_0,B_1]+[A_1,B_0]$ is a CIF vector subspace of $\mathbf{V}_1.$
\begin{lemma}\label{lem-4}\textnormal{
Let $A=(\lambda_A,\rho_A)$ and $B=(\lambda_B,\rho_B)$  be any two $\mathbb{Z}_2$-graded CIF vector subspaces of $\mathbf{V}$. Then $[A,B]$ is a $\mathbb{Z}_2$-graded CIF vector subspace of $\mathbf{V}$.}
\end{lemma}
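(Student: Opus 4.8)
The plan is to verify directly the two requirements in the definition of a $\mathbb{Z}_2$-graded CIF vector subspace given after Definition~\ref{def-2}: namely, that $[A,B]$ is a CIF vector subspace of $\mathbf{V}$, and that it decomposes as a direct sum $C_0\oplus C_1$ with $C_0$ a CIF vector subspace of $\mathbf{V}_0$ and $C_1$ a CIF vector subspace of $\mathbf{V}_1$. The first requirement is handed to us immediately by Lemma~\ref{lem-3}, so the whole argument reduces to producing the graded splitting. For that I would start from the hypotheses $A=A_0\oplus A_1$ and $B=B_0\oplus B_1$, write these as the sums $A=\mathfrak{a}_0+\mathfrak{a}_1$ and $B=\mathfrak{b}_0+\mathfrak{b}_1$ of the extended pieces from Definition~\ref{def-2}, and apply the distributivity established in Theorem~\ref{thrm-1} twice (once in each slot) to obtain
$$[A,B]=[A_0,B_0]+[A_0,B_1]+[A_1,B_0]+[A_1,B_1].$$

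The key step is the grading (support) analysis. Since $\mathbf{V}$ is a Lie superalgebra, its bracket respects the grading, $[\mathbf{V}_\alpha,\mathbf{V}_\beta]\subseteq\mathbf{V}_{\alpha+\beta}$. I would use this to show that each summand $[A_\alpha,B_\beta]$ is supported on $\mathbf{V}_{\alpha+\beta}$. Concretely, in the supremum defining $\lambda_{[A_\alpha,B_\beta]}(x)$ only representations $x=\sum_{i\in N}\alpha_i[x_i,y_i]$ with every $x_i\in\mathbf{V}_\alpha$ and every $y_i\in\mathbf{V}_\beta$ can contribute a positive value, because otherwise $r_{A_\alpha}(x_i)\wedge r_{B_\beta}(y_i)=0$ (and dually the factor in the infimum for $\rho_{[A_\alpha,B_\beta]}$ equals $1$). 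Any such contributing representation forces $x=\sum_{i\in N}\alpha_i[x_i,y_i]\in\mathbf{V}_{\alpha+\beta}$; hence for $x\notin\mathbf{V}_{\alpha+\beta}$ one gets $\lambda_{[A_\alpha,B_\beta]}(x)=0$ and $\rho_{[A_\alpha,B_\beta]}(x)=1$. Grouping by total degree then gives that $C_0:=[A_0,B_0]+[A_1,B_1]$ is supported on $\mathbf{V}_0$ and $C_1:=[A_0,B_1]+[A_1,B_0]$ is supported on $\mathbf{V}_1$, and both are CIF vector subspaces by the remark preceding the lemma (which applies Lemma~\ref{lem-3}). Thus $[A,B]=C_0+C_1$.

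Finally I would check that this sum is direct. Because $C_0$ is supported on $\mathbf{V}_0$ and $C_1$ on $\mathbf{V}_1$, while $\mathbf{V}_0\cap\mathbf{V}_1=\{0\}$, the intersection $C_0\cap C_1$ satisfies $\lambda_{C_0\cap C_1}(x)=1,\ \rho_{C_0\cap C_1}(x)=0$ when $x=0$ and $\lambda_{C_0\cap C_1}(x)=0,\ \rho_{C_0\cap C_1}(x)=1$ when $x\neq0$, which is exactly the condition in Definition~\ref{def-2} for $C_0+C_1$ to be a direct sum. Hence $[A,B]=C_0\oplus C_1$ is a $\mathbb{Z}_2$-graded CIF vector subspace of $\mathbf{V}$.

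The main obstacle I anticipate is the support argument in the middle paragraph: one must be careful that the supremum ranges over \emph{all} representations of $x$, so a single representation with a factor lying outside the correct graded piece cannot be allowed to spoil the conclusion. The clean way around this is the observation just made — only representations with all homogeneous factors in the correct components $\mathbf{V}_\alpha,\mathbf{V}_\beta$ yield a positive membership value, and every such representation automatically places $x$ in $\mathbf{V}_{\alpha+\beta}$. The remaining verifications (the two applications of Theorem~\ref{thrm-1} and the directness check) are routine given the earlier results and the standing assumption that $A$ is homogeneous with $B$.
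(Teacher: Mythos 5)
Your proposal is correct and follows essentially the same route as the paper: both decompose $[A,B]$ via the bilinearity of Theorem~\ref{thrm-1} into the four products $[A_\alpha,B_\beta]$, group them into $[A,B]_0=[A_0,B_0]+[A_1,B_1]$ and $[A,B]_1=[A_0,B_1]+[A_1,B_0]$, and verify directness by the support argument (the paper carries this out by explicitly expanding the sup/inf formulas for the extensions $\mathfrak{a}_\alpha,\mathfrak{b}_\beta$, which is exactly your observation that only representations with homogeneous factors in the correct components contribute). The key lemmas invoked (Lemma~\ref{lem-3}, Theorem~\ref{thrm-1}) and the overall structure match the paper's proof.
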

\begin{proof}
We get from Lemma~\ref{lem-3} that $[A,B]_0:=[A_0,B_0]+[A_1,B_1]$ is a CIF vector subspace of $\mathbf{V}_0$ and $[A,B]_1:=[A_0,B_1]+[A_1,B_0]$ is a CIF vector subspace of $\mathbf{V}_1.$ Define $[\mathfrak{a},\mathfrak{b}]_0:=[\mathfrak{a}_0,\mathfrak{b}_0]+[\mathfrak{a}_1,\mathfrak{b}_1]$ and $[\mathfrak{a},\mathfrak{b}]_1:=[\mathfrak{a}_0,\mathfrak{b}_1]+[\mathfrak{a}_1,\mathfrak{b}_0].$ Let $x\in\mathbf{V}_0$ we have
\begin{align*}
&\lambda_{[\mathfrak{a},\mathfrak{b}]_0}(x)=\lambda_{[\mathfrak{a}_0,\mathfrak{b}_0]+[\mathfrak{a}_1,\mathfrak{b}_1]}(x)\\
&=\sup\limits_{x=a+b}\{\lambda_{[\mathfrak{a}_0,\mathfrak{b}_0]}(x)\wedge\lambda_{[\mathfrak{a}_1,\mathfrak{b}_1]}(x)\}\\
&=\sup\limits_{x=a+b}\{\sup\limits_{a=\sum\limits_{i\in N}\alpha_i[k_i,l_i]}\{\min\limits_{i\in N}\{r_{\mathfrak{a}_0}(k_i)\wedge r_{\mathfrak{b}_0}(l_i)\}e^{i2\pi\min\limits_{i\in N}\{\omega_{\mathfrak{a}_0}(k_i)\wedge\omega_{\mathfrak{b}_0}(l_i)\}}\}\\
&\wedge\sup\limits_{b=\sum\limits_{j\in N^{'}}\beta_j[m_j,n_j]}\{\min\limits_{j\in N^{'}}\{r_{\mathfrak{a}_1}(m_j)\wedge r_{\mathfrak{b}_1}(n_j)\}e^{i2\pi\min\limits_{j\in N^{'}}\{\omega_{\mathfrak{a}_1}(m_j)\wedge\omega_{\mathfrak{b}_1}(n_j)\}}\}\}\\
&=\sup\limits_{x=a+b}\{\sup\limits_{a=\sum\limits_{i\in N}\alpha_i[k_i,l_i]}\{\min\limits_{i\in N}\{r_{A_0}(k_i)\wedge r_{B_0}(l_i)\}e^{i2\pi\min\limits_{i\in N}\{\omega_{A_0}(k_i)\wedge\omega_{B_0}(l_i)\}}\}\\
&\wedge\sup\limits_{b=\sum\limits_{j\in N^{'}}\beta_j[m_j,n_j]}\{\min\limits_{j\in N^{'}}\{r_{A_1}(m_j)\wedge r_{B_1}(n_j)\}e^{i2\pi\min\limits_{j\in N^{'}}\{\omega_{A_1}(m_j)\wedge\omega_{B_1}(n_j)\}}\}\}\\
&=\sup\limits_{x=a+b}\{\lambda_{[A_0,B_0]}(a)\wedge\lambda_{[A_1,B_1]}(b)\}\\
&=\lambda_{[A_0,B_0]+[A_1,B_1]}(x)=\lambda_{[A,B]_0}(x)
\end{align*}
and
\begin{align*}
&\rho_{[\mathfrak{a},\mathfrak{b}]_0}(x)=\rho_{[\mathfrak{a}_0,\mathfrak{b}_0]+[\mathfrak{a}_1,\mathfrak{b}_1]}(x)\\
&=\inf\limits_{x=a+b}\{\rho_{[\mathfrak{a}_0,\mathfrak{b}_0]}(x)\vee\rho_{[\mathfrak{a}_1,\mathfrak{b}_1]}(x)\}\\
&=\inf\limits_{x=a+b}\{\inf\limits_{a=\sum\limits_{i\in N}\alpha_i[k_i,l_i]}\{\max\limits_{i\in N}\{\hat{r}_{\mathfrak{a}_0}(k_i)\vee\hat{r}_{\mathfrak{b}_0}(l_i)\}e^{i2\pi\max\limits_{i\in N}\{hat{\omega}_{\mathfrak{a}_0}(k_i)\vee\hat{\omega}_{\mathfrak{b}_0}(l_i)\}}\}\\
&\vee\inf\limits_{b=\sum\limits_{j\in N^{'}}\beta_j[m_j,n_j]}\{\max\limits_{j\in N^{'}}\{\hat{r}_{\mathfrak{a}_1}(m_j)\vee \hat{r}_{\mathfrak{b}_1}(n_j)\}e^{i2\pi\max\limits_{j\in N^{'}}\{\hat{\omega}_{\mathfrak{a}_1}(m_j)\vee\hat{\omega}_{\mathfrak{b}_1}(n_j)\}}\}\}\\
&=\inf\limits_{x=a+b}\{\inf\limits_{a=\sum\limits_{i\in N}\alpha_i[k_i,l_i]}\{\max\limits_{i\in N}\{\hat{r}_{A_0}(k_i)\vee\hat{r}_{B_0}(l_i)\}e^{i2\pi\max\limits_{i\in N}\{\hat{\omega}_{A_0}(k_i)\vee\hat{\omega}_{B_0}(l_i)\}}\}\\
&\vee\inf\limits_{b=\sum\limits_{j\in N^{'}}\beta_j[m_j,n_j]}\{\max\limits_{j\in N^{'}}\{\hat{r}_{A_1}(m_j)\vee \hat{r}_{B_1}(n_j)\}e^{i2\pi\max\limits_{j\in N^{'}}\{\hat{\omega}_{A_1}(m_j)\vee\hat{\omega}_{B_1}(n_j)\}}\}\}\\
&=\inf\limits_{x=a+b}\{\rho_{[A_0,B_0]}(a)\vee\rho_{[A_1,B_1]}(b)\}\\
&=\rho_{[A_0,B_0]+[A_1,B_1]}(x)=\rho_{[A,B]_0}(x)
\end{align*}
Let $x\not\in\mathbf{V}_0$, then it easy to check that $\lambda_{[\mathfrak{a},\mathfrak{b}]_0}(x)=0$ and $\rho_{[\mathfrak{a},\mathfrak{b}]_0}(x)=1$. Similarly, for $x\in\mathbf{V}_1$, we have $\lambda_{[\mathfrak{a},\mathfrak{b}]_1}(x)=\lambda_{[A,B]_1}(x)$ and $\rho_{[\mathfrak{a},\mathfrak{b}]_1}(x)=\rho_{[A,B]_1}(x)$, for $x\not\in\mathbf{V}_1$, it is also easy to check that $\lambda_{[\mathfrak{a},\mathfrak{b}]_0}(x)=0$ and $\rho_{[\mathfrak{a},\mathfrak{b}]_0}(x)=1$. Then $[\mathfrak{a},\mathfrak{b}]_0$ and $[\mathfrak{a},\mathfrak{b}]_1$ are extensions of $[A,B]_0$ and $[A,B]_1$ respectively.\\
Clearly, $[\mathfrak{a},\mathfrak{b}]_0\cap[\mathfrak{a},\mathfrak{b}]_1=(\lambda_{[\mathfrak{a},\mathfrak{b}]_0\cap[\mathfrak{a},\mathfrak{b}]_1},\rho_{[\mathfrak{a},\mathfrak{b}]_0\cap[\mathfrak{a},\mathfrak{b}]_1})$, where
$$\lambda_{[\mathfrak{a},\mathfrak{b}]_0\cap[\mathfrak{a},\mathfrak{b}]_1}(x)=\lambda_{[\mathfrak{a},\mathfrak{b}]_0}(x)\wedge\lambda_{[\mathfrak{a},\mathfrak{b}]_1}(x)=\begin{cases} 0 & :\quad x\not=0\\
1 & :\quad x=0,\end{cases}$$and
$$\rho_{[\mathfrak{a},\mathfrak{b}]_0\cap[\mathfrak{a},\mathfrak{b}]_1}(x)=\rho_{[\mathfrak{a},\mathfrak{b}]_0}(x)\vee\rho_{[\mathfrak{a},\mathfrak{b}]_1}(x)\begin{cases} 1 & :\quad x\not=0\\
0 & :\quad x=0\end{cases}.$$
Let $x\in\mathbf{V}$, then
\begin{align*}
\lambda_{[A,B]}(x)&=\lambda_{[\mathfrak{a}_0+\mathfrak{a}_1,\mathfrak{b}_0+\mathfrak{b}_1]}(x)\\
&=\lambda_{([\mathfrak{a}_0,\mathfrak{b}_0]+[\mathfrak{a}_1,\mathfrak{b}_1]+[\mathfrak{a}_0,\mathfrak{b}_1]+[\mathfrak{a}_1,\mathfrak{b}_0])}(x)\\
&=\lambda_{([\mathfrak{a},\mathfrak{b}]_0+[\mathfrak{a},\mathfrak{b}]_1)}(x),
\end{align*}
and
\begin{align*}
\rho_{[A,B]}(x)&=\rho_{[\mathfrak{a}_0+\mathfrak{a}_1,\mathfrak{b}_0+\mathfrak{b}_1]}(x)\\
&=\rho_{([\mathfrak{a}_0,\mathfrak{b}_0]+[\mathfrak{a}_1,\mathfrak{b}_1]+[\mathfrak{a}_0,\mathfrak{b}_1]+[\mathfrak{a}_1,\mathfrak{b}_0])}(x)\\
&=\rho_{([\mathfrak{a},\mathfrak{b}]_0+[\mathfrak{a},\mathfrak{b}]_1)}(x).
\end{align*}
Hence $[A,B]=[A,B]_0\oplus[A,B]_1$ is a $\mathbb{Z}_2$-graded CIF subspace of $\mathbf{V}$.
\end{proof}
\begin{lemma}\label{lem-5}\textnormal{
Let $A=(\lambda_A,\rho_A)$ and $B=(\lambda_B,\rho_B)$  be any two $\mathbb{Z}_2$-graded CIF vector subspaces of $\mathbf{V}$. Then $[A,B]=[B,A]$.}
\end{lemma}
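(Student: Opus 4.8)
The plan is to reduce the identity to its graded components and then exploit the graded skew-symmetry of the Lie superbracket together with the symmetry of $\wedge$ and $\vee$ in the defining formulas. Since $A$ and $B$ are $\mathbb{Z}_2$-graded, write $A=A_0\oplus A_1$ and $B=B_0\oplus B_1$ as in Lemma~\ref{lem-4}, so that $[A,B]_0=[A_0,B_0]+[A_1,B_1]$ and $[A,B]_1=[A_0,B_1]+[A_1,B_0]$, with the analogous expressions for $[B,A]$. First I would record that the CIF sum is commutative, i.e. $P+Q=Q+P$ for CIF sets $P,Q$ of $\mathbf{V}$; this is immediate from Definition~\ref{def-1}, because $r_P(a)\wedge r_Q(b)=r_Q(b)\wedge r_P(a)$ and the supremum over $x=a+b$ is the same as the supremum over $x=b+a$ (dually for the non-membership part with $\vee$ and $\inf$). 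Granting this, the whole statement follows once I establish the single graded identity
$$[A_\alpha,B_\beta]=[B_\beta,A_\alpha]\qquad(\alpha,\beta\in\mathbb{Z}_2),$$
since then $[A,B]_0=[A_0,B_0]+[A_1,B_1]=[B_0,A_0]+[B_1,A_1]=[B,A]_0$ and likewise $[A,B]_1=[B_1,A_0]+[B_0,A_1]=[B,A]_1$, whence $[A,B]=[A,B]_0\oplus[A,B]_1=[B,A]$.

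For the graded identity I would argue as follows. Fix $x\in\mathbf{V}$ and any admissible decomposition $x=\sum_{i\in N}\alpha_i[x_i,y_i]$ with homogeneous $x_i\in\mathbf{V}_\alpha$ and $y_i\in\mathbf{V}_\beta$. By graded anticommutativity of the superbracket, $[x_i,y_i]=\varepsilon\,[y_i,x_i]$ with $\varepsilon=-(-1)^{\alpha\beta}\in\{-1,1\}\subseteq K$, so $x=\sum_{i\in N}(\varepsilon\alpha_i)[y_i,x_i]$ is a decomposition of the same $x$ whose summands now have first entries $y_i\in\mathbf{V}_\beta$ (measured by $B$) and second entries $x_i\in\mathbf{V}_\alpha$ (measured by $A$); that is, an admissible decomposition for $[B_\beta,A_\alpha]$. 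The scalar $\varepsilon\alpha_i$ does not enter the weight attached to a decomposition, and by commutativity of $\wedge$ we have $\min_{i}\{r_{A_\alpha}(x_i)\wedge r_{B_\beta}(y_i)\}=\min_{i}\{r_{B_\beta}(y_i)\wedge r_{A_\alpha}(x_i)\}$, with the same identity for the $\omega$-part; hence the complex weight is unchanged. Since $[x_i,y_i]\mapsto[y_i,x_i]$ together with $\alpha_i\mapsto\varepsilon\alpha_i$ furnishes a weight-preserving bijection between the admissible decompositions for $[A_\alpha,B_\beta]$ and those for $[B_\beta,A_\alpha]$, the two families of weights coincide and so do their suprema: $\lambda_{[A_\alpha,B_\beta]}(x)=\lambda_{[B_\beta,A_\alpha]}(x)$. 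The same bijection with commutativity of $\vee$ gives $\rho_{[A_\alpha,B_\beta]}(x)=\rho_{[B_\beta,A_\alpha]}(x)$, and the ``otherwise'' branches match because $x$ is expressible as $\sum\alpha_i[x_i,y_i]$ precisely when it is expressible as $\sum(\varepsilon\alpha_i)[y_i,x_i]$.

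I expect the only delicate point to be the use of \emph{homogeneous} generators $x_i,y_i$: graded anticommutativity $[u,v]=-(-1)^{|u||v|}[v,u]$ holds between homogeneous elements, and for inhomogeneous entries $[x_i,y_i]$ and $[y_i,x_i]$ need not be scalar multiples of one another. This is exactly why the reduction to the graded pieces $[A_\alpha,B_\beta]$, whose defining decompositions are required to use $x_i\in\mathbf{V}_\alpha$ and $y_i\in\mathbf{V}_\beta$, is carried out first, and why the hypothesis that $A$ and $B$ be $\mathbb{Z}_2$-graded is essential. Everything else is a routine transcription using the commutativity of $\min$ and $\max$, the commutativity of the CIF sum, and the distributivity of Theorem~\ref{thrm-1}.
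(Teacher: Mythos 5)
Your proposal is correct and follows essentially the same route as the paper: reduce to the graded components $[A,B]_0=[A_0,B_0]+[A_1,B_1]$ and $[A,B]_1=[A_0,B_1]+[A_1,B_0]$ via Lemma~\ref{lem-4}, then swap the bracket entries using graded anticommutativity, absorb the resulting sign $-(-1)^{\alpha\beta}$ into the scalar coefficients of the decomposition, and invoke commutativity of $\wedge$ and $\vee$. Your isolation of the component identity $[A_\alpha,B_\beta]=[B_\beta,A_\alpha]$ as a weight-preserving bijection of admissible decompositions is just a cleaner packaging of the computation the paper writes out case by case.
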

\begin{proof}
We know, by Lemma~\ref{lem-4}, that $[A,B]$ is a $\mathbb{Z}_2$-graded CIF subspace of $\mathbf{V}$, so for any $x=x_0+x_1\in\mathbf{V}$, we have $$\lambda_{[A,B]}(x)=\lambda_{([A,B]_0\oplus[A,B]_1)}(x)=\lambda_{[A,B]_0}(x_0)\wedge\lambda_{[A,B]_1}(x_1)$$ and
$$\rho_{[A,B]}(x)=\rho_{([A,B]_0\oplus[A,B]_1)}(x)=\rho_{[A,B]_0}(x_0)\vee\rho_{[A,B]_1}(x_1).$$
Hence,
\begin{align*}
&\lambda_{[A,B]_0}(x_0)=\lambda_{([A_0,B_0]+[A_1,B_1])}(x_0)
=\sup\limits_{x_0=a+b}\{\lambda_{[A_0,B_0]}(a)\wedge\lambda_{[A_1,B_1]}(b)\}\\
&=\sup\limits_{x_0=a+b}\{\sup\limits_{a=\sum\limits_{i\in N}\alpha_i[k_i,l_i]}\{\min\limits_{i\in N}\{r_{A_0}(k_i)\wedge r_{B_0}(l_i)\}e^{i2\pi\min\limits_{i\in N}\{\omega_{A_0}(k_i)\wedge\omega_{B_0}(l_i)\}}\}\\
&\wedge\sup\limits_{b=\sum\limits_{j\in N^{'}}\beta_j[m_j,n_j]}\{\min\limits_{j\in N^{'}}\{r_{A_1}(m_j)\wedge r_{B_1}(n_j)\}e^{i2\pi\min\limits_{j\in N^{'}}\{\omega_{A_1}(m_j)\wedge\omega_{B_1}(n_j)\}}\}\}\\
&=\sup\limits_{x_0=a+b}\{\sup\limits_{-a=\sum\limits_{i\in N}\alpha_i[l_i,k_i]}\{\min\limits_{i\in N}\{r_{B_0}(l_i)\wedge r_{A_0}(k_i)\}e^{i2\pi\min\limits_{i\in N}\{\omega_{B_0}(l_i)\wedge\omega_{A_0}(k_i)\}}\}\\
&\wedge\sup\limits_{b=\sum\limits_{j\in N^{'}}\beta_j[n_j,m_j]}\{\min\limits_{j\in N^{'}}\{r_{B_1}(n_j)\wedge r_{A_1}(m_j)\}e^{i2\pi\min\limits_{j\in N^{'}}\{\omega_{B_1}(n_j)\wedge\omega_{A_1}(m_j)\}}\}\}\\
&=\sup\limits_{x_0=a+b}\{\sup\limits_{a=\sum\limits_{i\in N}(-\alpha_i)[l_i,k_i]}\{\min\limits_{i\in N}\{r_{B_0}(l_i)\wedge r_{A_0}(k_i)\}e^{i2\pi\min\limits_{i\in N}\{\omega_{B_0}(l_i)\wedge\omega_{A_0}(k_i)\}}\}\\
&\wedge\sup\limits_{b=\sum\limits_{j\in N^{'}}\beta_j[n_j,m_j]}\{\min\limits_{j\in N^{'}}\{r_{B_1}(n_j)\wedge r_{A_1}(m_j)\}e^{i2\pi\min\limits_{j\in N^{'}}\{\omega_{B_1}(n_j)\wedge\omega_{A_1}(m_j)\}}\}\}\\
&=\sup\limits_{x_0=a+b}\{\lambda_{[B_0,A_0]}(a)\wedge\lambda_{[B_1,A_1]}(b)\}=\lambda_{[B,A]_0}(x_0),
\end{align*}
\begin{align*}
&\lambda_{[A,B]_1}(x_1)=\lambda_{([A_0,B_1]+[A_1,B_0])}(x_1)
=\sup\limits_{x_1=a+b}\{\lambda_{[A_0,B_1]}(a)\wedge\lambda_{[A_1,B_0]}(b)\}\\
&=\sup\limits_{x_1=a+b}\{\sup\limits_{a=\sum\limits_{i\in N}\alpha_i[k_i,l_i]}\{\min\limits_{i\in N}\{r_{A_0}(k_i)\wedge r_{B_1}(l_i)\}e^{i2\pi\min\limits_{i\in N}\{\omega_{A_0}(k_i)\wedge\omega_{B_1}(l_i)\}}\}\\
&\wedge\sup\limits_{b=\sum\limits_{j\in N^{'}}\beta_j[m_j,n_j]}\{\min\limits_{j\in N^{'}}\{r_{A_1}(m_j)\wedge r_{B_0}(n_j)\}e^{i2\pi\min\limits_{j\in N^{'}}\{\omega_{A_1}(m_j)\wedge\omega_{B_0}(n_j)\}}\}\}\\
&=\sup\limits_{x_1=a+b}\{\sup\limits_{-a=\sum\limits_{i\in N}\alpha_i[l_i,k_i]}\{\min\limits_{i\in N}\{r_{B_1}(l_i)\wedge r_{A_0}(k_i)\}e^{i2\pi\min\limits_{i\in N}\{\omega_{B_1}(l_i)\wedge\omega_{A_0}(k_i)\}}\}\\
&\wedge\sup\limits_{b=\sum\limits_{j\in N^{'}}\beta_j[n_j,m_j]}\{\min\limits_{j\in N^{'}}\{r_{B_0}(n_j)\wedge r_{A_1}(m_j)\}e^{i2\pi\min\limits_{j\in N^{'}}\{\omega_{B_0}(n_j)\wedge\omega_{A_1}(m_j)\}}\}\}\\
&=\sup\limits_{x_1=a+b}\{\sup\limits_{a=\sum\limits_{i\in N}(-\alpha_i)[l_i,k_i]}\{\min\limits_{i\in N}\{r_{B_1}(l_i)\wedge r_{A_0}(k_i)\}e^{i2\pi\min\limits_{i\in N}\{\omega_{B_1}(l_i)\wedge\omega_{A_0}(k_i)\}}\}\\
&\wedge\sup\limits_{b=\sum\limits_{j\in N^{'}}\beta_j[n_j,m_j]}\{\min\limits_{j\in N^{'}}\{r_{B_0}(n_j)\wedge r_{A_1}(m_j)\}e^{i2\pi\min\limits_{j\in N^{'}}\{\omega_{B_0}(n_j)\wedge\omega_{A_1}(m_j)\}}\}\}\\
&=\sup\limits_{x_1=a+b}\{\lambda_{[B_1,A_0]}(a)\wedge\lambda_{[B_0,A_1]}(b)\}=\lambda_{[B,A]_1}(x_1),
\end{align*}
and
\begin{align*}
&\rho_{[A,B]_0}(x_0)=\rho_{([A_0,B_0]+[A_1,B_1])}(x_0)
=\inf\limits_{x_0=a+b}\{\rho_{[A_0,B_0]}(a)\vee\rho_{[A_1,B_1]}(b)\}\\
&=\inf\limits_{x_0=a+b}\{\inf\limits_{a=\sum\limits_{i\in N}\alpha_i[k_i,l_i]}\{\max\limits_{i\in N}\{\hat{r}_{A_0}(k_i)\vee\hat{r}_{B_0}(l_i)\}e^{i2\pi\max\limits_{i\in N}\{\hat{\omega}_{A_0}(k_i)\vee\hat{\omega}_{B_0}(l_i)\}}\}\\
&\vee\inf\limits_{b=\sum\limits_{j\in N^{'}}\beta_j[m_j,n_j]}\{\max\limits_{j\in N^{'}}\{\hat{r}_{A_1}(m_j)\vee\hat{r}_{B_1}(n_j)\}e^{i2\pi\max\limits_{j\in N^{'}}\{\hat{\omega}_{A_1}(m_j)\vee\hat{\omega}_{B_1}(n_j)\}}\}\}\\
&=\inf\limits_{x_0=a+b}\{\inf\limits_{-a=\sum\limits_{i\in N}\alpha_i[l_i,k_i]}\{\max\limits_{i\in N}\{\hat{r}_{B_0}(l_i)\vee\hat{r}_{A_0}(k_i)\}e^{i2\pi\max\limits_{i\in N}\{\hat{\omega}_{B_0}(l_i)\vee\hat{\omega}_{A_0}(k_i)\}}\}\\
&\vee\inf\limits_{b=\sum\limits_{j\in N^{'}}\beta_j[n_j,m_j]}\{\max\limits_{j\in N^{'}}\{\hat{r}_{B_1}(n_j)\vee\hat{r}_{A_1}(m_j)\}e^{i2\pi\max\limits_{j\in N^{'}}\{\hat{\omega}_{B_1}(n_j)\vee\hat{\omega}_{A_1}(m_j)\}}\}\}\\
&=\inf\limits_{x_0=a+b}\{\inf\limits_{a=\sum\limits_{i\in N}(-\alpha_i)[l_i,k_i]}\{\max\limits_{i\in N}\{\hat{r}_{B_0}(l_i)\vee \hat{r}_{A_0}(k_i)\}e^{i2\pi\max\limits_{i\in N}\{\hat{\omega}_{B_0}(l_i)\vee\hat{\omega}_{A_0}(k_i)\}}\}\\
&\vee\inf\limits_{b=\sum\limits_{j\in N^{'}}\beta_j[n_j,m_j]}\{\max\limits_{j\in N^{'}}\{\hat{r}_{B_1}(n_j)\vee\hat{r}_{A_1}(m_j)\}e^{i2\pi\max\limits_{j\in N^{'}}\{\hat{\omega}_{B_1}(n_j)\vee\hat{\omega}_{A_1}(m_j)\}}\}\}\\
&=\inf\limits_{x_0=a+b}\{\rho_{[B_0,A_0]}(a)\vee\rho_{[B_1,A_1]}(b)\}=\rho_{[B,A]_0}(x_0),
\end{align*}
\begin{align*}
&\rho_{[A,B]_1}(x_1)=\rho_{([A_0,B_1]+[A_1,B_0])}(x_1)
=\inf\limits_{x_1=a+b}\{\rho_{[A_0,B_1]}(a)\vee\rho_{[A_1,B_0]}(b)\}\\
&=\inf\limits_{x_1=a+b}\{\inf\limits_{a=\sum\limits_{i\in N}\alpha_i[k_i,l_i]}\{\max\limits_{i\in N}\{\hat{r}_{A_0}(k_i)\vee\hat{r}_{B_1}(l_i)\}e^{i2\pi\max\limits_{i\in N}\{\hat{\omega}_{A_0}(k_i)\vee\hat{\omega}_{B_1}(l_i)\}}\}\\
&\vee\inf\limits_{b=\sum\limits_{j\in N^{'}}\beta_j[m_j,n_j]}\{\max\limits_{j\in N^{'}}\{\hat{r}_{A_1}(m_j)\vee\hat{r}_{B_0}(n_j)\}e^{i2\pi\max\limits_{j\in N^{'}}\{\hat{\omega}_{A_1}(m_j)\vee\hat{\omega}_{B_0}(n_j)\}}\}\}\\
&=\inf\limits_{x_1=a+b}\{\inf\limits_{-a=\sum\limits_{i\in N}\alpha_i[l_i,k_i]}\{\max\limits_{i\in N}\{\hat{r}_{B_1}(l_i)\vee\hat{r}_{A_0}(k_i)\}e^{i2\pi\max\limits_{i\in N}\{\hat{\omega}_{B_1}(l_i)\vee\hat{\omega}_{A_0}(k_i)\}}\}\\
&\vee\inf\limits_{b=\sum\limits_{j\in N^{'}}\beta_j[n_j,m_j]}\{\max\limits_{j\in N^{'}}\{\hat{r}_{B_0}(n_j)\vee\hat{r}_{A_1}(m_j)\}e^{i2\pi\max\limits_{j\in N^{'}}\{\hat{\omega}_{B_0}(n_j)\vee\hat{\omega}_{A_1}(m_j)\}}\}\}\\
&=\inf\limits_{x_1=a+b}\{\inf\limits_{a=\sum\limits_{i\in N}(-\alpha_i)[l_i,k_i]}\{\max\limits_{i\in N}\{\hat{r}_{B_1}(l_i)\vee \hat{r}_{A_0}(k_i)\}e^{i2\pi\max\limits_{i\in N}\{\hat{\omega}_{B_1}(l_i)\vee\hat{\omega}_{A_0}(k_i)\}}\}\\
&\vee\inf\limits_{b=\sum\limits_{j\in N^{'}}\beta_j[n_j,m_j]}\{\max\limits_{j\in N^{'}}\{\hat{r}_{B_0}(n_j)\vee\hat{r}_{A_1}(m_j)\}e^{i2\pi\max\limits_{j\in N^{'}}\{\hat{\omega}_{B_0}(n_j)\vee\hat{\omega}_{A_1}(m_j)\}}\}\}\\
&=\inf\limits_{x_1=a+b}\{\rho_{[B_1,A_0]}(a)\vee\rho_{[B_0,A_1]}(b)\}=\rho_{[B,A]_1}(x_1).
\end{align*}
So $[A,B]=[B,A]$.
\end{proof}
\begin{Theorem}\label{thrm-3}\textnormal{
Let $A=(\lambda_A,\rho_A)$ and $B=(\lambda_B,\rho_B)$  be any two CIF ideals of $V$. Then $[A,B]$ is a CIF ideal of $\mathbf{V}$.}
\end{Theorem}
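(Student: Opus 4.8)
The plan is to verify the two defining conditions of a CIF ideal for $[A,B]=(\lambda_{[A,B]},\rho_{[A,B]})$. Condition (1), that $[A,B]$ is a $\mathbb{Z}_2$-graded CIF vector subspace of $\mathbf{V}$, is immediate: since every CIF ideal is in particular a $\mathbb{Z}_2$-graded CIF vector subspace, $A$ and $B$ satisfy the hypotheses of Lemma~\ref{lem-4}, which already gives that $[A,B]=[A,B]_0\oplus[A,B]_1$ is $\mathbb{Z}_2$-graded. Thus the entire content of the theorem lies in the absorption condition (2), namely $\lambda_{[A,B]}([x,y])\geq\lambda_{[A,B]}(x)\vee\lambda_{[A,B]}(y)$ and $\rho_{[A,B]}([x,y])\leq\rho_{[A,B]}(x)\wedge\rho_{[A,B]}(y)$ for all $x,y\in\mathbf{V}$.

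First I would reduce the $\vee$ (resp. $\wedge$) in (2) to a one-sided statement. Because $[A,B]$ is a CIF vector subspace, multiplication of its argument by a nonzero scalar does not decrease the value (condition (2) of a CIF vector space gives $\lambda_{[A,B]}(\alpha z)\geq\lambda_{[A,B]}(z)$, and applying this to $\alpha^{-1}$ as well yields equality when $\alpha=\pm1$). Since $[x,y]=-(-1)^{|x||y|}[y,x]$ on homogeneous elements, this gives $\lambda_{[A,B]}([x,y])=\lambda_{[A,B]}([y,x])$ and $\rho_{[A,B]}([x,y])=\rho_{[A,B]}([y,x])$. Hence it suffices to prove $\lambda_{[A,B]}([x,y])\geq\lambda_{[A,B]}(x)$ and $\rho_{[A,B]}([x,y])\leq\rho_{[A,B]}(x)$ for all $x,y$; the bounds involving $y$ then follow by interchanging $x$ and $y$.

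The heart of the proof is this one-sided membership bound, which I would establish by converting a good representation of $x$ into a good representation of $[x,y]$. Fix $\varepsilon>0$ and choose, nearly optimally in the defining supremum of $\lambda_{[A,B]}(x)$, an expression $x=\sum_{i\in N}\alpha_i[x_i,y_i]$ with $\min_{i}\{r_A(x_i)\wedge r_B(y_i)\}$ within $\varepsilon$ of $r_{[A,B]}(x)$ (and likewise for the angular part). By bilinearity $[x,y]=\sum_{i\in N}\alpha_i[[x_i,y_i],y]$, and the super Jacobi identity rewrites each summand as $[[x_i,y_i],y]=[x_i,[y_i,y]]-(-1)^{|x_i||y_i|}[y_i,[x_i,y]]$ (treating $x_i,y_i$ as homogeneous and extending bilinearly; the signs are harmless scalars). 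In the first term $[x_i,[y_i,y]]$ the inner slot lies in $\mathbf{V}$ and, since $B$ is a CIF ideal, $r_B([y_i,y])\geq r_B(y_i)$ and $\omega_B([y_i,y])\geq\omega_B(y_i)$, so this bracket is an $[A,B]$-generator of value at least $r_A(x_i)\wedge r_B(y_i)$. In the second term $[y_i,[x_i,y]]$ the roles of $A$ and $B$ are swapped, so I would read it as a $[B,A]$-generator: since $A$ is a CIF ideal, $r_A([x_i,y])\geq r_A(x_i)$, giving value at least $r_B(y_i)\wedge r_A(x_i)$, and Lemma~\ref{lem-5} ($[A,B]=[B,A]$) lets us feed this contribution back into $\lambda_{[A,B]}$. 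Collecting all summands exhibits $[x,y]$ as a finite sum of brackets whose generator values are each $\geq\min_i\{r_A(x_i)\wedge r_B(y_i)\}$, so the supremum defining $\lambda_{[A,B]}([x,y])$ is at least this quantity; letting $\varepsilon\to0$ gives $r_{[A,B]}([x,y])\geq r_{[A,B]}(x)$ and, by the homogeneity of $A$ with $B$, the matching angular inequality, i.e. $\lambda_{[A,B]}([x,y])\geq\lambda_{[A,B]}(x)$. The non-membership inequality $\rho_{[A,B]}([x,y])\leq\rho_{[A,B]}(x)$ is proved dually, replacing $\sup,\wedge,\geq$ by $\inf,\vee,\leq$ and using the ideal inequalities $\hat{r}_B([y_i,y])\leq\hat{r}_B(y_i)$ and $\hat{r}_A([x_i,y])\leq\hat{r}_A(x_i)$.

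I expect the main obstacle to be exactly this conversion step: a naive rewriting $[x,y]=\sum_i\alpha_i[[x_i,y_i],y]$ keeps $y$ — whose $B$-membership may be arbitrarily small — in a generator slot and is therefore useless, so the super Jacobi identity is indispensable for pushing $y$ into an inner bracket that the ideal property of $A$ or $B$ can absorb. The one genuinely delicate point is that one of the two Jacobi terms naturally presents itself as a $[B,A]$-generator rather than an $[A,B]$-generator; reconciling this requires the symmetry $[A,B]=[B,A]$ from Lemma~\ref{lem-5}, together with care that the modulus and argument estimates run in parallel, which is guaranteed by the standing assumption that $A$ is homogeneous with $B$.
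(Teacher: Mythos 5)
Your proposal is correct and follows essentially the same route as the paper: condition (1) via Lemma~\ref{lem-4}, and condition (2) by taking a near-optimal representation $x=\sum_i\alpha_i[x_i,y_i]$, rewriting each $[[x_i,y_i],y]$ with the super Jacobi identity so that $y$ lands in an inner bracket absorbed by the ideal property of $A$ or $B$, and invoking $[A,B]=[B,A]$ (Lemma~\ref{lem-5}) for the swapped terms. The only differences are presentational: the paper argues by contradiction with an intermediate threshold $t$ rather than an $\varepsilon$-optimal representation, and writes the Jacobi decomposition with explicit homogeneous components instead of the sign $(-1)^{|x_i||y_i|}$.
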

\begin{proof}
By Lemma~\ref{lem-5}, $[A,B]$ is a $\mathbb{Z}_2$-graded CIF subspace of $\mathbf{V}$, since $A=(\lambda_A,\rho_A)$, $B=(\lambda_B,\rho_B)$ are CIF ideals of $\mathbf{V}$. To complete the proof we must show that $\lambda_{[A,B]}([x,y])\geq\lambda_{[A,B]}(x)\vee\lambda_{[A,B]}(y)$ and $\rho_{[A,B]}([x,y])\leq\rho_{[A,B]}(x)\wedge\rho_{[A,B]}(y)$.\\
Suppose that $\lambda_{[A,B]}([x,y])<\lambda_{[A,B]}(x)\vee\lambda_{[A,B]}(y)$, then we have $\lambda_{[A,B]}([x,y])<\lambda_{[A,B]}(x)$ or $\lambda_{[A,B]}([x,y])<\lambda_{[A,B]}(y)$. If $\lambda_{[A,B]}([x,y])<\lambda_{[A,B]}(x)$, then $r_{[A,B]}([x,y])e^{i2\pi \omega_{[A,B]}([x,y])}<r_{[A,B]}(x)e^{i2\pi \omega_{[A,B]}(x)}$ which implies that $r_{[A,B]}([x,y])<r_{[A,B]}(x)$ or $\omega_{[A,B]}([x,y])<\omega_{[A,B]}(x)$. Let $r_{[A,B]}([x,y])<r_{[A,B]}(x)$. Choose a number $t\in[0,1]$ such that $r_{[A,B]}([x,y])<t<r_{[A,B]}(x)$, then there exist $x_i,y_i\in\mathbf{V}$ and $\alpha_i\in K$ such that $x=\sum_{i\in N}\alpha_i[x_i,y_i]$ and for all $i\in N$, $r_A(x_i)>t$, $r_B(y_i)>t$. Moreover, $r_A(x_i)=r_{A_0+A_1}(x_{i_0}+x_{i_1})=r_{A_0}(x_{i_0})\wedge r_{A_1}(x_{i_1})>t$, and $r_B(y_i)=r_{B_0+B_1}(y_{i_0}+y_{i_1})=r_{B_0}(y_{i_0})\wedge r_{B_1}(y_{i_1})>t$, then we have $r_{A_0}(x_{i_0})>t$, $r_{A_1}(x_{i_1})>t$ and $r_{B_0}(y_{i_0})>t$, $r_{B_1}(y_{i_1})>t$. Since $[x,y]=[\sum_{i\in N}\alpha_i[x_i,y_i],y]=\sum_{i\in N}\alpha_i[[x_i,y_i],y]$, and since
$$[[x_i,y_i],y]=[x_i,[y_i,y]]-[y_{i_0},[x_{i_0},y]]+[y_{i_1},[x_{i_1},y]]-[y_{i_1},[x_{i_0},y]]-[y_{i_0},[x_{i_1},y]]$$
we get
\begin{align*}
r_{[A,B]}([x,y])&=r_{[A,B]}(\sum_{i\in N}\alpha_i[[x_i,y_i],y])\geq r_{[A,B]}([[x_i,y_i],y])\geq\min\{r_{[A,B]}([x_i,[y_i,y]]),\\
&r_{[A,B]}(-[y_{i_0},[x_{i_0},y]]),r_{[A,B]}([y_{i_1},[x_{i_1},y]]),r_{[A,B]}(-[y_{i_1},[x_{i_0},y]]),r_{[A,B]}(-[y_{i_0},[x_{i_1},y]])\}
\end{align*}
if $r_{[A,B]}([x_i,[y_i,y]])$ is the minimum, then we have
\begin{align*}
r_{[A,B]}([x_i,[y_i,y]])&\geq r_A(x_i)\wedge r_B([y_i,y])\\
&\geq r_A(x_i)\wedge(r_B(y_i)\vee r_B(y))>t;
\end{align*}
if $r_{[A,B]}(-[y_{i_0},[x_{i_0},y]])$ is the minimum, then, by Lemma~\ref{lem-5}, we have
\begin{align*}
r_{[A,B]}(-[y_{i_0},[x_{i_0},y]])&=r_{[A,B]}([y_{i_0},[x_{i_0},y]])=r_{[B,A]}([y_{i_0},[x_{i_0},y]])\\
&\geq r_B(y_{i_0})\wedge r_A([x_{i_0},y])\\
&\geq r_B(y_{i_0})\wedge(r_A(x_{i_0})\vee r_A(y))\\
&=r_{B_0}(y_{i_0})\wedge(r_{A_0}(x_{i_0})\vee r_A(y))>t;
\end{align*}
if $r_{[A,B]}([y_{i_1},[x_{i_1},y]])$ is the minimum, then, by Lemma~\ref{lem-5}, we have
\begin{align*}
r_{[A,B]}([y_{i_1},[x_{i_1},y]])&=r_{[B,A]}([y_{i_1},[x_{i_1},y]])\\
&\geq r_B(y_{i_1})\wedge r_A([x_{i_1},y])\\
&\geq r_B(y_{i_1})\wedge(r_A(x_{i_1})\vee r_A(y))\\
&=r_{B_1}(y_{i_1})\wedge(r_{A_1}(x_{i_1})\vee r_A(y))>t;
\end{align*}
if $r_{[A,B]}(-[y_{i_1},[x_{i_0},y]])$ is the minimum, then, by Lemma~\ref{lem-5}, we have
\begin{align*}
r_{[A,B]}(-[y_{i_1},[x_{i_0},y]]&=r_{[A,B]}([y_{i_1},[x_{i_0},y]])\\
&=r_{[B,A]}([y_{i_1},[x_{i_0},y]])\\
&\geq r_B(y_{i_1})\wedge r_A([x_{i_0},y])\\
&\geq r_B(y_{i_1})\wedge(r_A(x_{i_0})\vee r_A(y))\\
&=r_{B_1}(y_{i_1})\wedge(r_{A_0}(x_{i_0})\vee r_A(y))>t;
\end{align*}
if $r_{[A,B]}(-[y_{i_0},[x_{i_1},y]])$ is the minimum, then, by Lemma~\ref{lem-5}, we have
\begin{align*}
r_{[A,B]}(-[y_{i_0},[x_{i_1},y]]&=r_{[A,B]}([y_{i_0},[x_{i_1},y]])\\
&=r_{[B,A]}([y_{i_0},[x_{i_1},y]])\\
&\geq r_B(y_{i_0})\wedge r_A([x_{i_1},y])\\
&\geq r_B(y_{i_0})\wedge(r_A(x_{i_1})\vee r_A(y))\\
&=r_{B_0}(y_{i_0})\wedge(r_{A_1}(x_{i_1})\vee r_A(y))>t.
\end{align*}
So we have $r_{[A,B]}([x,y])>t>r_{[A,B]}([x,y])$, this is a contradiction, similarly we can prove the case of $\omega_{[A,B]}([x,y])<\omega_{[A,B]}(x)$. We use the similar method of $\lambda_{[A,B]}([x,y])<\lambda_{[A,B]}(y)$.\\
Also suppose that $\rho_{[A,B]}([x,y])>\rho_{[A,B]}(x)\wedge\rho_{[A,B]}(y)$, then we have $\rho_{[A,B]}([x,y])>\rho_{[A,B]}(x)$ or $\rho_{[A,B]}([x,y])>\rho_{[A,B]}(y)$. If $\rho_{[A,B]}([x,y])>\rho_{[A,B]}(x)$, then $\hat{r}_{[A,B]}([x,y])e^{i2\pi \hat{\omega}_{[A,B]}([x,y])}>\hat{r}_{[A,B]}(x)e^{i2\pi\hat{\omega}_{[A,B]}(x)}$ which implies that $\hat{r}_{[A,B]}([x,y])>\hat{r}_{[A,B]}(x)$ or $\hat{\omega}_{[A,B]}([x,y])>\hat{\omega}_{[A,B]}(x)$. Let $\hat{r}_{[A,B]}([x,y])>\hat{r}_{[A,B]}(x)$. Then $\hat{r}_{[A,B]}([x,y])>t>\hat{r}_{[A,B]}(x)$ for some $t\in[0,1]$. So there exist $x_i,y_i\in\mathbf{V}$ and $\alpha_i\in K$ such that $x=\sum_{i\in N}\alpha_i[x_i,y_i]$ and for all $i\in N$, $\hat{r}_A(x_i)<t$, $\hat{r}_B(y_i)<t$. Moreover, $\hat{r}_A(x_i)=\hat{r}_{A_0+A_1}(x_{i_0}+x_{i_1})=\hat{r}_{A_0}(x_{i_0})\vee\hat{r}_{A_1}(x_{i_1})<t$, and $\hat{r}_B(y_i)=\hat{r}_{B_0+B_1}(y_{i_0}+y_{i_1})=\hat{r}_{B_0}(y_{i_0})\vee \hat{r}_{B_1}(y_{i_1})<t$, then we have $\hat{r}_{A_0}(x_{i_0})<t$, $\hat{r}_{A_1}(x_{i_1})<t$ and $\hat{r}_{B_0}(y_{i_0})<t$, $\hat{r}_{B_1}(y_{i_1})<t$.
We get
\begin{align*}
\hat{r}_{[A,B]}([x,y])&=\hat{r}_{[A,B]}(\sum_{i\in N}\alpha_i[[x_i,y_i],y])\leq \hat{r}_{[A,B]}([[x_i,y_i],y])\leq\max\{\hat{r}_{[A,B]}([x_i,[y_i,y]]),\\
&\hat{r}_{[A,B]}(-[y_{i_0},[x_{i_0},y]]),\hat{r}_{[A,B]}([y_{i_1},[x_{i_1},y]]), \hat{r}_{[A,B]}(-[y_{i_1},[x_{i_0},y]]),\hat{r}_{[A,B]}(-[y_{i_0},[x_{i_1},y]])\}
\end{align*}
if $\hat{r}_{[A,B]}([x_i,[y_i,y]])$ is the maximum, then we have
\begin{align*}
\hat{r}_{[A,B]}([x_i,[y_i,y]])&\leq\hat{r}_A(x_i)\vee\hat{r}_B([y_i,y])\\
&\leq\hat{r}_A(x_i)\vee(\hat{r}_B(y_i)\wedge\hat{r}_B(y))<t;
\end{align*}
if $\hat{r}_{[A,B]}(-[y_{i_0},[x_{i_0},y]])$ is the maximum, then, by Lemma~\ref{lem-5}, we have
\begin{align*}
\hat{r}_{[A,B]}(-[y_{i_0},[x_{i_0},y]])&=\hat{r}_{[A,B]}([y_{i_0},[x_{i_0},y]])=\hat{r}_{[B,A]}([y_{i_0},[x_{i_0},y]])\\
&\leq\hat{r}_B(y_{i_0})\vee\hat{r}_A([x_{i_0},y])\\
&\leq\hat{r}_B(y_{i_0})\vee(r_A(x_{i_0})\wedge\hat{r}_A(y))\\
&=\hat{r}_{B_0}(y_{i_0})\vee(\hat{r}_{A_0}(x_{i_0})\wedge\hat{r}_A(y))<t;
\end{align*}
if $\hat{r}_{[A,B]}([y_{i_1},[x_{i_1},y]])$ is the maximum, then, by Lemma~\ref{lem-5}, we have
\begin{align*}
\hat{r}_{[A,B]}([y_{i_1},[x_{i_1},y]])&=\hat{r}_{[B,A]}([y_{i_1},[x_{i_1},y]])\\
&\leq\hat{r}_B(y_{i_1})\vee\hat{r}_A([x_{i_1},y])\\
&\leq\hat{r}_B(y_{i_1})\vee(\hat{r}_A(x_{i_1})\wedge\hat{r}_A(y))\\
&=\hat{r}_{B_1}(y_{i_1})\vee(\hat{r}_{A_1}(x_{i_1})\wedge\hat{r}_A(y))<t;
\end{align*}
if $\hat{r}_{[A,B]}(-[y_{i_1},[x_{i_0},y]])$ is the maximum, then, by Lemma~\ref{lem-5}, we have
\begin{align*}
\hat{r}_{[A,B]}(-[y_{i_1},[x_{i_0},y]]&=\hat{r}_{[A,B]}([y_{i_1},[x_{i_0},y]])\\
&=\hat{r}_{[B,A]}([y_{i_1},[x_{i_0},y]])\\
&\leq\hat{r}_B(y_{i_1})\vee\hat{r}_A([x_{i_0},y])\\
&\leq\hat{r}_B(y_{i_1})\vee(\hat{r}_A(x_{i_0})\wedge\hat{r}_A(y))\\
&=\hat{r}_{B_1}(y_{i_1})\vee(\hat{r}_{A_0}(x_{i_0})\wedge\hat{r}_A(y))<t;
\end{align*}
if $\hat{r}_{[A,B]}(-[y_{i_0},[x_{i_1},y]])$ is the maximum, then, by Lemma~\ref{lem-5}, we have
\begin{align*}
\hat{r}_{[A,B]}(-[y_{i_0},[x_{i_1},y]]&=\hat{r}_{[A,B]}([y_{i_0},[x_{i_1},y]])\\
&=\hat{r}_{[B,A]}([y_{i_0},[x_{i_1},y]])\\
&\leq\hat{r}_B(y_{i_0})\vee\hat{r}_A([x_{i_1},y])\\
&\leq\hat{r}_B(y_{i_0})\vee(\hat{r}_A(x_{i_1})\wedge\hat{r}_A(y))\\
&=\hat{r}_{B_0}(y_{i_0})\vee(\hat{r}_{A_1}(x_{i_1})\wedge\hat{r}_A(y))<t.
\end{align*}
So we have $\hat{r}_{[A,B]}([x,y])<t<\hat{r}_{[A,B]}([x,y])$, this is a contradiction, similarly we can prove the case of $\hat{\omega}_{[A,B]}([x,y])>\hat{\omega}_{[A,B]}(x)$. We use the similar method of $\rho_{[A,B]}([x,y])>\rho_{[A,B]}(y)$. Hence $[A,B]$ is CIF ideal of $\mathbf{V}$.
\end{proof}
\section{Anti-homomorphisms of complex intuitionistic fuzzy brackets}
\begin{defn}\cite{AMR}
If $\phi:\mathbf{V}\rightarrow\mathbf{V}'$ is a linear map between lie superalgebras $\mathbf{V},\ \mathbf{V}'$ which satisfies:
\begin{eqnarray}
\phi(\mathbf{V}_\alpha)&\subseteq& \mathbf{V}'_\alpha ,\  (\alpha= 0,1),\\
\phi([x, y])&=& -[\phi(x), \phi(y)]
\end{eqnarray}
Then $\phi$ is called an anti-homomorphism of lie-superalgebras.
\end{defn}
\begin{defn}\cite{AMR}
Let $\mathbf{V}$, $\mathbf{V}'$ be $K$-vector spaces and let $f :\mathbf{V}\rightarrow\mathbf{V}'$ be any map. If $A=(\lambda_A, \rho_A)$, $B=(\lambda_B, \rho_B)$ are CIF vector subspaces of $\mathbf{V}$ and $\mathbf{V}'$, respectively, then the preimage of $B$ under $f$ is defined to be a CIF set $f^{-1}(B)=(\lambda_{f^{-1}(B)}, \rho_{f^{-1}(B)})$, where $\lambda_{f^{-1}(B)}(x)=\lambda_B(f(x))$ and $\rho_{f^{-1}(B)}(x)=\rho_B(f(x))$ for any $x\in\mathbf{V}$ and the image of $A=(\lambda_A, \rho_A)$ under $f$ is defined to be the CIF set $f(A)=(\lambda_{f(A)}, \rho_{f(A)})$ where $$\lambda_{f(A)}(y)=\begin{cases} \underset{x\in f^{-1}(y)}{\sup}\{\lambda_A(x)\}=\underset{x\in f^{-1}(y)}{\sup}\{r_A(x)e^{i2\pi\omega_A(x)}\} & :\quad y\in f(\mathbf{V})\\
0 & :\quad y\not\in f(\mathbf{V})\end{cases}$$ and
$$\rho_{f(A)}(y)=\begin{cases} \underset{x\in f^{-1}(y)}{\inf}\{\rho_A(x)\}=\underset{x\in f^{-1}(y)}{\inf}\{\hat{r}_A(x)e^{i2\pi\hat{\omega}_A(x)}\} & :\quad y\in f(\mathbf{V})\\
1 & :\quad y\not\in f(\mathbf{V})\end{cases}$$
\end{defn}
\begin{Theorem}\label{thrm-4}\textnormal{
Let $A=(\lambda_A,\rho_A)$, $B=(\lambda_B,\rho_B)$  be any two CIF ideals of $\mathbf{V}$, and
let $\phi:\mathbf{V}\rightarrow\mathbf{V}'$ be a surjective anti-homomorphism of lie-superalgebras. Then the CIF set $\phi([A,B])$ of $\mathbf{V}'$ is an anti-CIF ideal of $\mathbf{V}'$ and $\phi([A,B])\subseteq[\phi(A),\phi(B)]$.}
\end{Theorem}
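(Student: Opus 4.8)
The plan is to separate the statement into its two assertions and treat each with the tools already assembled: Theorem~\ref{thrm-3} for the ideal structure of $[A,B]$, and the image/preimage machinery recalled from \cite{AMR} for the transport across $\phi$.

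For the first assertion, I would begin by invoking Theorem~\ref{thrm-3} to conclude that $[A,B]=(\lambda_{[A,B]},\rho_{[A,B]})$ is itself a CIF ideal of $\mathbf{V}$. Since $\phi$ is a surjective anti-homomorphism of Lie superalgebras, the image of a CIF ideal under $\phi$ is an anti-CIF ideal of $\mathbf{V}'$ by the corresponding result established in \cite{AMR}. Applying that result to the CIF ideal $[A,B]$ then yields immediately that $\phi([A,B])$ is an anti-CIF ideal of $\mathbf{V}'$; no new computation is required here beyond recognizing $[A,B]$ as a legitimate CIF ideal to feed into the previous paper's theorem.

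For the containment $\phi([A,B])\subseteq[\phi(A),\phi(B)]$ I would argue pointwise, establishing $\lambda_{\phi([A,B])}(y)\leq\lambda_{[\phi(A),\phi(B)]}(y)$ and $\rho_{\phi([A,B])}(y)\geq\rho_{[\phi(A),\phi(B)]}(y)$ for every $y\in\mathbf{V}'$. The central idea is to push a bracket decomposition forward through $\phi$. If $x\in\phi^{-1}(y)$ admits a representation $x=\sum_{i\in N}\alpha_i[x_i,y_i]$, then applying $\phi$ and using $\phi([x_i,y_i])=-[\phi(x_i),\phi(y_i)]$ gives $y=\sum_{i\in N}(-\alpha_i)[\phi(x_i),\phi(y_i)]$, a valid bracket representation of $y$ in $\mathbf{V}'$ with coefficients $-\alpha_i\in K$. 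From the definition of the image, $\lambda_A(x_i)\leq\lambda_{\phi(A)}(\phi(x_i))$ in the componentwise order, since $x_i\in\phi^{-1}(\phi(x_i))$, so both $r_{\phi(A)}(\phi(x_i))\geq r_A(x_i)$ and $\omega_{\phi(A)}(\phi(x_i))\geq\omega_A(x_i)$, and likewise for $B$. Hence the inner minimum associated with this pushed-forward representation dominates $\min_{i\in N}\{r_A(x_i)\wedge r_B(y_i)\}e^{i2\pi\min_{i\in N}\{\omega_A(x_i)\wedge\omega_B(y_i)\}}$. Taking the supremum over all representations of $x$ gives $\lambda_{[\phi(A),\phi(B)]}(y)\geq\lambda_{[A,B]}(x)$, and then the supremum over all $x\in\phi^{-1}(y)$ gives $\lambda_{[\phi(A),\phi(B)]}(y)\geq\lambda_{\phi([A,B])}(y)$. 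The non-membership part is symmetric, replacing suprema by infima and minima by maxima and using $\hat{r}_{\phi(A)}(\phi(x_i))\leq\hat{r}_A(x_i)$, which yields $\rho_{\phi([A,B])}(y)\geq\rho_{[\phi(A),\phi(B)]}(y)$. The cases where $x$ or $y$ is not a sum of brackets are trivial, since the relevant value is $0$ or $1$, and surjectivity of $\phi$ guarantees every $y$ has a preimage.

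The step I expect to be most delicate is the bookkeeping of the nested suprema and infima together with the two real parameters $r$ and $\omega$ (and $\hat{r}$, $\hat{\omega}$) at once, ensuring that the inequality survives intact through each stage of the $\sup$/$\inf$ over representations and then over the fiber $\phi^{-1}(y)$. The conceptual core is clean: transporting a representation forward through the anti-homomorphism only introduces the sign change $\alpha_i\mapsto-\alpha_i$, which is harmless because $K$ is a field, and the image's $\sup$/$\inf$ pushes the membership values in the favorable direction. The care lies in confirming that the direction of each inequality is preserved at the level of the full complex membership values rather than only their moduli.
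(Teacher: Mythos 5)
Your proposal is correct and follows essentially the same route as the paper: Theorem~\ref{thrm-3} plus the image result of \cite{AMR} for the anti-CIF ideal claim, and then the pushforward of each bracket decomposition $x=\sum_i\alpha_i[x_i,y_i]$ to $y=\sum_i(-\alpha_i)[\phi(x_i),\phi(y_i)]$ combined with $\lambda_A(x_i)\leq\lambda_{\phi(A)}(\phi(x_i))$ and its dual for the containment. The nested $\sup$/$\inf$ bookkeeping you flag as delicate is handled in the paper by exactly the chain of inequalities you describe.
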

\begin{proof}
 By Theorem~\ref{thrm-3}, $[A,B]$ is a CIF ideal of $\mathbf{V}$ and by \cite[Proposition 4.4]{AMR}, $\phi([A,B])$ is an anti-CIF ideal of $\mathbf{V}'$. To complete the proof we need to show that $[\phi(A),\phi(B)]\subseteq\phi([A,B])$. Let $y\in\mathbf{V}'$, then
\begin{align*}
\lambda_{\phi([A,B])}(y)&=\sup\limits_{y=\phi(x)}\{\lambda_{[A,B]}(x)\}\\
&=\sup\limits_{y=\phi(x)}\{\sup\limits_{x=\sum\limits_{i\in N}\alpha_i[x_i,y_i]}\{\min\limits_{i\in N}\{\lambda_A(x_i)\wedge \lambda_B(y_i)\}\}\}\\
&=\sup\limits_{y=\sum\limits_{i\in N}(-\alpha_i)[\phi(x_i),\phi(y_i)]}\{\{\min\limits_{i\in N}\{\lambda_A(x_i)\wedge \lambda_B(y_i)\}\}\},
\end{align*}
for each $i\in N$, let $m_i=\phi(x_i)$ and $n_i=\phi(y_i)$. Then $\lambda_{\phi(A)}(m_i)=\sup_{m_i=\phi(x)}\{\lambda_A(x)\}\geq\lambda_A(x_i)$, similarly, $\lambda_{\phi(A)}(n_i)\geq\lambda_A(y_i)$. So
\begin{align*}
\lambda_{\phi([A,B])}(y)&=\sup\limits_{y=\sum\limits_{i\in N}(-\alpha_i)[\phi(x_i),\phi(y_i)]}\{\{\min\limits_{i\in N}\{\lambda_A(x_i)\wedge\lambda_B(y_i)\}\}\}\\
&\leq\sup\limits_{y=\sum\limits_{i\in N}(-\alpha_i)[m_i,n_i]}\{\min\limits_{i\in N}\{\lambda_{\phi(A)}(m_i)\wedge\lambda_{\phi(B)}(n_i)\}\}\\
&=\lambda_{[\phi(A),\phi(B)]}(y).
\end{align*}
and
\begin{align*}
\rho_{\phi([A,B])}(y)&=\inf\limits_{y=\phi(x)}\{\rho_{[A,B]}(x)\}\\
&=\inf\limits_{y=\phi(x)}\{\inf\limits_{x=\sum\limits_{i\in N}\alpha_i[x_i,y_i]}\{\max\limits_{i\in N}\{\rho_A(x_i)\vee \rho_B(y_i)\}\}\}\\
&=\inf\limits_{y=\sum\limits_{i\in N}(-\alpha_i)[\phi(x_i),\phi(y_i)]}\{\{\max\limits_{i\in N}\{\rho_A(x_i)\vee \rho_B(y_i)\}\}\},
\end{align*}
for each $i\in N$, let $m_i=\phi(x_i)$ and $n_i=\phi(y_i)$. Then $\rho_{\phi(A)}(m_i)=\inf_{m_i=\phi(x)}\{\rho_A(x)\}\leq\rho_A(x_i)$, similarly, $\rho_{\phi(A)}(n_i)\leq\rho_A(y_i)$. So
\begin{align*}
\rho_{\phi([A,B])}(y)&=\inf\limits_{y=\sum\limits_{i\in N}(-\alpha_i)[\phi(x_i),\phi(y_i)]}\{\{\max\limits_{i\in N}\{\rho_A(x_i)\vee\rho_B(y_i)\}\}\}\\
&\geq\inf\limits_{y=\sum\limits_{i\in N}(-\alpha_i)[m_i,n_i]}\{\max\limits_{i\in N}\{\rho_{\phi(A)}(m_i)\vee\rho_{\phi(B)}(n_i)\}\}\\
&=\rho_{[\phi(A),\phi(B)]}(y).
\end{align*}
Hence $\phi([A,B])\subseteq[\phi(A),\phi(B)]$.
\end{proof}
\begin{Theorem}\textnormal{
Let $\phi:\mathbf{V}\rightarrow\mathbf{V}'$ be a surjective anti-homomorphism of lie-superalgebras, and Let $A=(\lambda_A,\rho_A)$, $B=(\lambda_B,\rho_B)$  be any two CIF ideals of $\mathbf{V}'$. Then the CIF set $\phi^{-1}([A,B])$ of $\mathbf{V}$ is an anti-CIF ideal of $\mathbf{V}$ and $\phi^{-1}([A,B])\subseteq[\phi^{-1}(A),\phi^{-1}(B)]$.}
\end{Theorem}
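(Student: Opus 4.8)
The plan is to mirror the proof of Theorem~\ref{thrm-4}, replacing the image $\phi(\cdot)$ by the preimage $\phi^{-1}(\cdot)$ throughout. The statement splits into two independent assertions, which I would treat separately. For the first, that $\phi^{-1}([A,B])$ is an anti-CIF ideal of $\mathbf{V}$, I would invoke Theorem~\ref{thrm-3} to conclude that $[A,B]$ is a CIF ideal of $\mathbf{V}'$ (since $A$ and $B$ are CIF ideals of $\mathbf{V}'$), and then apply the preimage counterpart of the result of \cite{AMR} used in Theorem~\ref{thrm-4} (the statement that the preimage of a CIF ideal under an anti-homomorphism of Lie superalgebras is an anti-CIF ideal). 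This reduces the first assertion to a direct citation.

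For the inclusion I would compute directly from the definitions. Starting from $\lambda_{\phi^{-1}([A,B])}(x)=\lambda_{[A,B]}(\phi(x))$, I would expand the right-hand side by the definition of the bracket product as a supremum over all decompositions $\phi(x)=\sum_{i\in N}\alpha_i[u_i,v_i]$ with $u_i,v_i\in\mathbf{V}'$, and then match each such decomposition with one of $x$ in $\mathbf{V}$. The two tools for this matching are the preimage identities $\lambda_{\phi^{-1}(A)}(p)=\lambda_A(\phi(p))$ and $\rho_{\phi^{-1}(A)}(p)=\rho_A(\phi(p))$, together with the anti-homomorphism identity $\phi([p,q])=-[\phi(p),\phi(q)]$, which carries any decomposition $x=\sum_{j}\beta_j[p_j,q_j]$ in $\mathbf{V}$ to the decomposition $\phi(x)=\sum_j(-\beta_j)[\phi(p_j),\phi(q_j)]$ in $\mathbf{V}'$. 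The membership part would be handled with $\sup/\min/\wedge$ exactly as in Theorem~\ref{thrm-4}, and the non-membership part with the dual $\inf/\max/\vee$.

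The main obstacle is the \emph{direction} of this matching, and it is precisely here that surjectivity of $\phi$ is forced. Passing from a decomposition of $x$ in $\mathbf{V}$ to a decomposition of $\phi(x)$ in $\mathbf{V}'$ is automatic from the anti-homomorphism identity; the reverse passage, which the claimed inclusion demands, requires choosing preimages $p_i\in\phi^{-1}(u_i)$ and $q_i\in\phi^{-1}(v_i)$ (possible by surjectivity) and then controlling the element $w=\sum_i(-\alpha_i)[p_i,q_i]$, which satisfies $\phi(w)=\phi(x)$ but in general only agrees with $x$ modulo $\ker\phi$. Reconciling this kernel discrepancy is the delicate step, and I would attempt to absorb it by exploiting that $\lambda_A(0)=1$ and $\rho_A(0)=0$, so that any kernel contribution leaves the membership value unchanged, and that $[A,B]$ is $\mathbb{Z}_2$-graded by Lemma~\ref{lem-4}, which lets one organize the adjustment homogeneously. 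Once the matching is secured, the chain of inequalities closes as in Theorem~\ref{thrm-4}, giving $\lambda_{\phi^{-1}([A,B])}(x)\le\lambda_{[\phi^{-1}(A),\phi^{-1}(B)]}(x)$ and $\rho_{\phi^{-1}([A,B])}(x)\ge\rho_{[\phi^{-1}(A),\phi^{-1}(B)]}(x)$, whence $\phi^{-1}([A,B])\subseteq[\phi^{-1}(A),\phi^{-1}(B)]$.
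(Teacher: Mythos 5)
Your first paragraph coincides with everything the paper actually writes: the published proof invokes Theorem~\ref{thrm-3} to conclude that $[A,B]$ is a CIF ideal of $\mathbf{V}'$, cites the preimage proposition of \cite{AMR} for the anti-CIF ideal claim, and then dismisses the inclusion with ``the rest of the proof is easy, and it will be left to the reader.'' So for the part you work out in detail there is no paper proof to compare against, and it is precisely there that your argument has a genuine gap. You correctly isolate the obstruction: a decomposition $\phi(x)=\sum_i\alpha_i[u_i,v_i]$ in $\mathbf{V}'$ lifts (by surjectivity) only to an element $w=\sum_i(-\alpha_i)[p_i,q_i]$ with $\phi(w)=\phi(x)$, not to a decomposition of $x$ itself. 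But your proposed repair --- absorbing $k=x-w\in\ker\phi$ using $\lambda_A(0)=1$ and $\rho_A(0)=0$ --- does not close it. Those normalizations give $\lambda_{\phi^{-1}(A)}(k)=1$, which is information about the CIF set $\phi^{-1}(A)$, not about the bracket product: $\lambda_{[\phi^{-1}(A),\phi^{-1}(B)]}$ vanishes on any element that is not a sum of brackets in $\mathbf{V}$, a kernel element need not be such a sum, and there is no mechanism for appending $k$ to the decomposition of $w$ without expressing $k$ as a sum of brackets of high membership. The $\mathbb{Z}_2$-grading of $[A,B]$ does not help either.

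In fact the inclusion in the stated direction can fail, so no argument will close this gap. Take $\mathbf{V}=\mathfrak{sl}_2(K)\oplus K$ with the second summand central and abelian, $\mathbf{V}'=\mathfrak{sl}_2(K)$, $\phi=-\pi$ with $\pi$ the projection (a surjective anti-homomorphism), and $A=B$ the constant ideal $\lambda\equiv 1$, $\rho\equiv 0$ on $\mathbf{V}'$. Since $\mathfrak{sl}_2$ is perfect, $\lambda_{[A,B]}\equiv 1$ on $\mathbf{V}'$ and hence $\lambda_{\phi^{-1}([A,B])}\equiv 1$ on $\mathbf{V}$; but every bracket in $\mathbf{V}$ lies in $\mathfrak{sl}_2(K)\oplus 0$, so $\lambda_{[\phi^{-1}(A),\phi^{-1}(B)]}(x)=0$ for $x=(0,c)$ with $c\neq 0$. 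The containment that really is easy, needs no surjectivity, and is presumably what was intended is the reverse one, $[\phi^{-1}(A),\phi^{-1}(B)]\subseteq\phi^{-1}([A,B])$: push each decomposition $x=\sum_j\beta_j[p_j,q_j]$ forward to $\phi(x)=\sum_j(-\beta_j)[\phi(p_j),\phi(q_j)]$ and use $\lambda_{\phi^{-1}(A)}(p)=\lambda_A(\phi(p))$ together with the dual computation for $\rho$. You should either prove that direction or flag the statement as written.
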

\begin{proof}
By Theorem~\ref{thrm-3}, $[A,B]$ is a CIF ideal of $\mathbf{V}'$ and by \cite[Proposition 4.3]{AMR}, $\phi^{-1}([A,B])$ is an anti-CIF ideal of $\mathbf{V}$. The rest of the proof is easy, and it will be left to the reader.
\end{proof}
\begin{Theorem}\label{thrm-15}\textnormal{
Let $\phi:\mathbf{V}\rightarrow\mathbf{V}'$ be a surjective anti-homomorphism of lie-superalgebras, and Let $A=(\lambda_A,\rho_A)$, $B=(\lambda_B,\rho_B)$  be any CIF ideals of $\mathbf{V}'$. Then the CIF set $\phi^{-1}(A+B)=\phi^{-1}(A)+\phi^{-1}(B)$ of $V$ is an anti-CIF ideal of $\mathbf{V}$.}
\end{Theorem}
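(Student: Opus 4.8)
The plan is to prove the statement in two stages: first establish the identity $\phi^{-1}(A+B)=\phi^{-1}(A)+\phi^{-1}(B)$ of CIF sets, and then invoke the already-available machinery to conclude that this common object is an anti-CIF ideal. For the second stage I would argue as follows. Since $A$ and $B$ are CIF ideals of $\mathbf{V}'$, the Theorem on sums of CIF ideals (stated just after the definition of CIF ideal) gives that $A+B$ is again a CIF ideal of $\mathbf{V}'$. Then, because $\phi$ is a surjective anti-homomorphism, \cite[Proposition 4.3]{AMR} shows that the preimage $\phi^{-1}(A+B)$ is an anti-CIF ideal of $\mathbf{V}$. Thus, once the identity of the first stage is in hand, the conclusion that $\phi^{-1}(A)+\phi^{-1}(B)$ is an anti-CIF ideal is immediate.

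For the first stage I would prove the equality by comparing the membership and non-membership functions pointwise, establishing the two containments $\phi^{-1}(A)+\phi^{-1}(B)\subseteq\phi^{-1}(A+B)$ and $\phi^{-1}(A+B)\subseteq\phi^{-1}(A)+\phi^{-1}(B)$ separately. Fix $x\in\mathbf{V}$ and recall that $\lambda_{\phi^{-1}(A+B)}(x)=\lambda_{A+B}(\phi(x))$ while $r_{\phi^{-1}(A)}(c)=r_A(\phi(c))$ and $\omega_{\phi^{-1}(A)}(c)=\omega_A(\phi(c))$ (and likewise for $B$ and for the non-membership data). The first containment is the easy direction: any decomposition $x=c+d$ in $\mathbf{V}$ pushes forward under the linear map $\phi$ to a decomposition $\phi(x)=\phi(c)+\phi(d)$ in $\mathbf{V}'$, so every term of the supremum defining $\lambda_{\phi^{-1}(A)+\phi^{-1}(B)}(x)$ is dominated by a term of the supremum defining $\lambda_{A+B}(\phi(x))$; the parallel comparison of infima (with the inequality reversed, as befits $\rho$) handles the non-membership part.

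The reverse containment is where the real work lies, and it is here that surjectivity of $\phi$ is essential. Given a decomposition $\phi(x)=a+b$ with $a,b\in\mathbf{V}'$, surjectivity provides $c\in\mathbf{V}$ with $\phi(c)=a$; I would then set $d=x-c$, so that $x=c+d$ and, by linearity, $\phi(d)=\phi(x)-\phi(c)=(a+b)-a=b$. This furnishes a genuine decomposition of $x$ in $\mathbf{V}$ realizing the prescribed values, since $r_A(a)\wedge r_B(b)=r_{\phi^{-1}(A)}(c)\wedge r_{\phi^{-1}(B)}(d)$, so each term of the supremum for $\lambda_{A+B}(\phi(x))$ is dominated by the supremum for $\lambda_{\phi^{-1}(A)+\phi^{-1}(B)}(x)$, and symmetrically for the arguments and for the non-membership parts. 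Taking suprema (respectively infima) over all decompositions then yields the reverse inequalities, and combining the two directions gives the equality. The main obstacle is precisely this adjustment step: a naive choice of independent preimages of $a$ and $b$ need not sum to $x$, and the device of fixing one preimage and defining the other by subtraction is what forces the decomposition of $x$ to match the given decomposition of $\phi(x)$. I would also remark in passing that homogeneity of $\phi^{-1}(A)$ with $\phi^{-1}(B)$ is inherited from that of $A$ with $B$, since the moduli and arguments are pulled back unchanged, so that both sums are well defined.
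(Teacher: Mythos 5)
Your proof is correct and follows essentially the same route as the paper: the anti-CIF ideal claim is obtained exactly as there, by combining the theorem that a sum of CIF ideals is again a CIF ideal with \cite[Proposition 4.3]{AMR}. For the identity $\phi^{-1}(A+B)=\phi^{-1}(A)+\phi^{-1}(B)$ the paper merely defers to the approach of \cite[Theorem 4.5]{AMR}, and your explicit two-containment argument --- pushing decompositions of $x$ forward under $\phi$ for one inclusion, and using surjectivity together with the adjustment $d=x-c$ for the other --- is the standard way of filling in that step and is sound.
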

\begin{proof}
Since $A$, $B$ are CIF ideals of $\mathbf{V}'$, then by \cite[Theorem 3.11]{AMR}, $A+B$ is a CIF ideal of $\mathbf{V}'$ and by \cite[Proposition 4.3]{AMR}, $\phi^{-1}(A+B)$ is an anti- CIF ideal of $\mathbf{V}$. In order to complete the proof it remains to show that $\phi^{-1}(A+B)=\phi^{-1}(A)+\phi^{-1}(B)$. Again by using the same approach used in the proof of \cite[Theorem 4.5]{AMR}, one can easily see that $\phi^{-1}(A+B)=\phi^{-1}(A)+\phi^{-1}(B)$.
\end{proof}
\begin{Theorem}\label{thrm-11}\textnormal{
Let $\phi:\mathbf{V}\rightarrow\mathbf{V}'$ be a surjective anti-homomorphism of lie-superalgebras, and Let $B=(\lambda_B,\rho_B)$  be any CIF ideal of $\mathbf{V}'$. Then, for any $\alpha\in K$, the CIF set $\phi^{-1}(\alpha B)=\alpha\phi^{-1}(B)$ of $V$ is an anti-CIF ideal of $\mathbf{V}$.}
\end{Theorem}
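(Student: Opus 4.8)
The plan is to follow the template of the proof of Theorem~\ref{thrm-15}: exhibit a CIF ideal of $\mathbf{V}'$ to which \cite[Proposition 4.3]{AMR} can be applied, and then verify the claimed identity by a direct computation. There are two things to settle, namely that $\phi^{-1}(\alpha B)$ is an anti-CIF ideal of $\mathbf{V}$ and that $\phi^{-1}(\alpha B)=\alpha\phi^{-1}(B)$ as CIF sets.

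First I would verify that $\alpha B=(\lambda_{\alpha B},\rho_{\alpha B})$ is itself a CIF ideal of $\mathbf{V}'$. For $\alpha\neq 0$ this is immediate from the definition of scalar multiplication of a CIF set: writing $\lambda_{\alpha B}(x)=\lambda_B(\alpha^{-1}x)$ and using that $B$ is a CIF ideal together with the identities $\alpha^{-1}(x+y)=\alpha^{-1}x+\alpha^{-1}y$ and $\alpha^{-1}[x,y]=[\alpha^{-1}x,y]=[x,\alpha^{-1}y]$, each of the subspace, scalar, and bracket inequalities transfers from $B$ to $\alpha B$ (and symmetrically for $\rho_{\alpha B}$); the case $\alpha=0$ reduces to the trivial ideal taking the value $1$ at $0$ and $0$ elsewhere. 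Once $\alpha B$ is a CIF ideal of $\mathbf{V}'$, \cite[Proposition 4.3]{AMR} immediately gives that its preimage $\phi^{-1}(\alpha B)$ is an anti-CIF ideal of $\mathbf{V}$.

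It then remains to establish $\phi^{-1}(\alpha B)=\alpha\phi^{-1}(B)$, which for $\alpha\neq 0$ is a short chain using only the definition of the preimage and the $K$-linearity of $\phi$:
\begin{align*}
\lambda_{\phi^{-1}(\alpha B)}(x)&=\lambda_{\alpha B}(\phi(x))=\lambda_B\bigl(\alpha^{-1}\phi(x)\bigr)\\
&=\lambda_B\bigl(\phi(\alpha^{-1}x)\bigr)=\lambda_{\phi^{-1}(B)}(\alpha^{-1}x)=\lambda_{\alpha\phi^{-1}(B)}(x),
\end{align*}
and the identical computation with $\inf$, $\hat{r}$, $\hat{\omega}$ replacing $\sup$, $r$, $\omega$ yields $\rho_{\phi^{-1}(\alpha B)}(x)=\rho_{\alpha\phi^{-1}(B)}(x)$. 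The only algebraic input is the commutation $\alpha^{-1}\phi(x)=\phi(\alpha^{-1}x)$, which is exactly the linearity of $\phi$.

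The step I expect to be the main obstacle is the boundary case $\alpha=0$. There $\lambda_{\alpha\phi^{-1}(B)}$ equals $1$ only at $x=0$, whereas $\lambda_{\phi^{-1}(0B)}(x)=\lambda_{0B}(\phi(x))$ equals $1$ on all of $\ker\phi$, so the two sides coincide exactly when $\phi$ is injective; for a merely surjective $\phi$ one must either restrict the statement to $\alpha\neq 0$ or read the $\alpha=0$ case under the convention that both sides denote the trivial CIF ideal. For $\alpha\neq 0$ the argument above is complete, and together with \cite[Proposition 4.3]{AMR} it shows that $\phi^{-1}(\alpha B)=\alpha\phi^{-1}(B)$ is an anti-CIF ideal of $\mathbf{V}$.
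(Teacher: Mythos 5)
Your argument is essentially the paper's own proof: the same chain $\lambda_{\phi^{-1}(\alpha B)}(x)=\lambda_{\alpha B}(\phi(x))=\lambda_B(\alpha^{-1}\phi(x))=\lambda_B(\phi(\alpha^{-1}x))=\lambda_{\phi^{-1}(B)}(\alpha^{-1}x)=\lambda_{\alpha\phi^{-1}(B)}(x)$ together with its counterpart for $\rho$, preceded by the appeal to \cite[Proposition 4.3]{AMR} (where you usefully make explicit that $\alpha B$ is itself a CIF ideal, which the paper leaves tacit). Your closing observation is also correct: the paper's computation silently assumes $\alpha\neq 0$, and for $\alpha=0$ the identity $\phi^{-1}(0B)=0\,\phi^{-1}(B)$ genuinely fails whenever $\ker\phi\neq\{0\}$, since the left-hand side takes the value $1$ on all of $\ker\phi$ while the right-hand side does so only at $0$.
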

\begin{proof}
We already proved in \cite[Proposition 4.3]{AMR} that $\phi^{-1}(\alpha B)$ is an anti-CIF ideal of $\mathbf{V}$. Therefore the only thing we need to prove is that  $\phi^{-1}(\alpha B)=\alpha\phi^{-1}(B)$. Let $x\in \mathbf{V}$, then
\begin{align*}
\lambda_{\phi^{-1}(\alpha B)}(x)&=\lambda_{\alpha B}(\phi(x))=\lambda_B(\alpha^{-1}\phi(x))\\
&=\lambda_B(\phi(\alpha^{-1}x))=\lambda_{\phi^{-1}(B)}(\alpha^{-1}x)\\
&=\lambda_{\alpha\phi^{-1}(B)}(x)
\end{align*}
and
\begin{align*}
\rho_{\phi^{-1}(\alpha B)}(x)&=\rho_{\alpha B}(\phi(x))=\rho_B(\alpha^{-1}\phi(x))\\
&=\rho_B(\phi(\alpha^{-1}x))=\rho_{\phi^{-1}(B)}(\alpha^{-1}x)\\
&=\rho_{\alpha\phi^{-1}(B)}(x).
\end{align*}
Hence, $\phi^{-1}(\alpha B)=\alpha\phi^{-1}(B)$.
\end{proof}
\begin{Theorem}\label{thrm-10}\textnormal{
Let $\phi:\mathbf{V}\rightarrow\mathbf{V}'$ be a surjective anti-homomorphism of lie-superalgebras, and Let $A=(\lambda_A,\rho_A)$  be any CIF ideal of $\mathbf{V}$. Then, for any $\alpha\in K$, the CIF set $\phi(\alpha A)=\alpha\phi(A)$ of $\mathbf{V}'$ is an anti-CIF ideal of $\mathbf{V}'$.}
\end{Theorem}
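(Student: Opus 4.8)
The plan is to mirror the two-step pattern of Theorem~\ref{thrm-11}: first dispose of the structural claim by citing the image-version result of \cite{AMR}, then verify the set-theoretic identity $\phi(\alpha A)=\alpha\phi(A)$ by a direct computation on $\lambda$ and $\rho$. For the structural part, observe that since $A$ is a CIF ideal of $\mathbf{V}$, so is the scalar multiple $\alpha A$ for every $\alpha\in K$ (including the degenerate $\alpha=0$, where $0\cdot A$ is the trivial CIF ideal); hence by \cite[Proposition 4.4]{AMR} its image $\phi(\alpha A)$ under the surjective anti-homomorphism $\phi$ is an anti-CIF ideal of $\mathbf{V}'$. It therefore remains only to establish the equality.

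For the equality I would treat $\alpha\neq 0$ and $\alpha=0$ separately. Assume first $0\neq\alpha\in K$ and fix $y\in\mathbf{V}'$; by surjectivity $y\in\phi(\mathbf{V})$. Unwinding the definitions of the image and of $\alpha A$ gives
\begin{align*}
\lambda_{\phi(\alpha A)}(y)=\sup\limits_{x\in\phi^{-1}(y)}\{\lambda_{\alpha A}(x)\}=\sup\limits_{x\in\phi^{-1}(y)}\{\lambda_A(\alpha^{-1}x)\}.
\end{align*}
The decisive step is the substitution $z=\alpha^{-1}x$: because $\phi$ is linear, the condition $\phi(x)=y$ is equivalent to $\alpha\phi(z)=y$, i.e.\ to $\phi(z)=\alpha^{-1}y$, so the map $x\mapsto\alpha^{-1}x$ is a bijection from $\phi^{-1}(y)$ onto $\phi^{-1}(\alpha^{-1}y)$. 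Consequently the supremum becomes $\sup_{z\in\phi^{-1}(\alpha^{-1}y)}\{\lambda_A(z)\}=\lambda_{\phi(A)}(\alpha^{-1}y)=\lambda_{\alpha\phi(A)}(y)$, and the identical change of variables applied to the infimum yields $\rho_{\phi(\alpha A)}(y)=\rho_{\phi(A)}(\alpha^{-1}y)=\rho_{\alpha\phi(A)}(y)$.

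For $\alpha=0$ I would argue directly from the definitions. Since $\phi(0)=0$, every $x\in\phi^{-1}(y)$ with $y\neq 0$ is nonzero, so $\lambda_{0\cdot A}$ is identically $0$ on $\phi^{-1}(y)$ for $y\neq 0$, while for $y=0$ the element $0\in\phi^{-1}(0)$ contributes $\lambda_{0\cdot A}(0)=1$; comparing with the definition of $0\cdot\phi(A)$ shows the two functions coincide at every $y$, and the infimum computation for $\rho$ is symmetric. Combining both cases gives $\phi(\alpha A)=\alpha\phi(A)$ for all $\alpha\in K$.

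I expect the only real subtlety to be the change-of-variables step inside the supremum and infimum: one must verify that $x\mapsto\alpha^{-1}x$ restricts to a bijection between the fibres $\phi^{-1}(y)$ and $\phi^{-1}(\alpha^{-1}y)$, which is precisely where the linearity of $\phi$ and the invertibility of the nonzero scalar $\alpha$ are used. Everything else is routine bookkeeping parallel to Theorem~\ref{thrm-11}; in particular the anti-homomorphism (bracket-reversing) property is not needed for this argument, as only the linearity of $\phi$ enters, and the CIF-ideal hypotheses are used solely through the cited structural proposition and the trivial $\alpha=0$ case.
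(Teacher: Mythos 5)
Your proposal is correct and follows essentially the same route as the paper's proof: cite \cite[Proposition 4.4]{AMR} for the anti-CIF-ideal property, then compute $\lambda_{\phi(\alpha A)}$ and $\rho_{\phi(\alpha A)}$ via the change of variables $x\mapsto\alpha^{-1}x$ between the fibres $\phi^{-1}(y)$ and $\phi^{-1}(\alpha^{-1}y)$. You are in fact slightly more careful than the paper, which writes $\alpha^{-1}$ throughout and leaves the $\alpha=0$ case (and the bijectivity of the fibre map) implicit.
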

\begin{proof}
We already proved in \cite[Proposition 4.4]{AMR} that $\phi(\alpha A)$ is an anti-CIF ideal of $\mathbf{V}'$. Therefore the only thing we need to prove is that  $\phi(\alpha A)=\alpha\phi(A)$. Let $y\in\mathbf{V}'$, then
\begin{align*}
\lambda_{\phi(\alpha A)}(y)&=\sup\limits_{x\in\phi^{-1}(y)}\{\lambda_{\alpha A}(x)\}=\sup\limits_{x\in\phi^{-1}(y)}\{\lambda_A(\alpha^{-1}(x))\}\\
&=\sup\limits_{\alpha^{-1}x\in\phi^{-1}(\alpha^{-1}y)}\{\lambda_A(\alpha^{-1}(x))\}\\
&=\lambda_{\phi(A)}(\alpha^{-1}y)=\lambda_{\alpha\phi(A)}(y)
\end{align*}
and
\begin{align*}
\rho_{\phi(\alpha A)}(y)&=\inf\limits_{x\in\phi^{-1}(y)}\{\rho_{\alpha A}(x)\}=\inf\limits_{x\in\phi^{-1}(y)}\{\rho_A(\alpha^{-1}(x))\}\\
&=\inf\limits_{\alpha^{-1}x\in\phi^{-1}(\alpha^{-1}y)}\{\rho_A(\alpha^{-1}(x))\}\\
&=\rho_{\phi(A)}(\alpha^{-1}y)=\rho_{\alpha\phi(A)}(y).
\end{align*}
Hence, $\phi(\alpha A)=\alpha\phi(A)$.
\end{proof}
\begin{Theorem}\textnormal{
Let $\phi:\mathbf{V}\rightarrow\mathbf{V}'$ be a surjective anti-homomorphism of lie-superalgebras, and
let $A_1=(\lambda_{A_1},\rho_{A_1})$, $A_2=(\lambda_{A_2},\rho_{A_2})$ and $B_1=(\lambda_{B_1},\rho_{B_1})$, $B_2=(\lambda_{B_2},\rho_{B_2})$ and $A=(\lambda_A,\rho_A)$, $B=(\lambda_B,\rho_B)$  be CIF subspaces of $\mathbf{V}$. Then for any $\alpha,\beta\in K$, we have
$[\phi(\alpha A_1+\beta A_2),\phi(B)]=\alpha[\phi(A_1),\phi(B)]+\beta[\phi(A_2),\phi(B)],$\\
$[\phi(A),\phi(\alpha B_1+\beta B_2)]=\alpha[\phi(A),\phi(B_1)]+\beta[\phi(A),\phi(B_2)]$.}
\end{Theorem}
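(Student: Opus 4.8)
The plan is to reduce both identities to the bilinearity of the bracket product already obtained in Theorem~\ref{thrm-9}, once we know that the image operator $\phi(\cdot)$ acts linearly on CIF sets. By the symmetry of the two statements it suffices to prove the first identity; the second is obtained by the same argument applied to the second slot, using the second half of Theorem~\ref{thrm-9}.

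First I would establish the linearity of the image, namely
\[
\phi(\alpha A_1 + \beta A_2) = \alpha\,\phi(A_1) + \beta\,\phi(A_2).
\]
The homogeneity part $\phi(\alpha A) = \alpha\,\phi(A)$ is exactly Theorem~\ref{thrm-10}; although that theorem is stated for a CIF ideal, its proof only manipulates the defining suprema and infima of $\lambda_{\alpha A}$ and $\rho_{\alpha A}$ together with the substitution $x \mapsto \alpha^{-1}x$, so it carries over verbatim to CIF subspaces. The additivity $\phi(A_1 + A_2) = \phi(A_1) + \phi(A_2)$ is the image counterpart of Theorem~\ref{thrm-15} and follows by the computation of \cite[Theorem 4.5]{AMR}: for $y\in\mathbf{V}'$ one writes $\lambda_{\phi(A_1+A_2)}(y) = \sup_{y=\phi(x)}\{\lambda_{A_1+A_2}(x)\}$, expands $\lambda_{A_1+A_2}(x)$ as a supremum over decompositions $x = a+b$, and uses the surjectivity of $\phi$ to identify this with the supremum over decompositions $y = \phi(a)+\phi(b)$ defining $\lambda_{\phi(A_1)+\phi(A_2)}(y)$, the non-membership function being treated dually with infima. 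Combining the homogeneity and additivity displays gives the asserted linearity of $\phi$.

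With this established the argument is immediate. Since $\phi$ is a surjective linear map, $\phi(A_1)$, $\phi(A_2)$ and $\phi(B)$ are CIF subspaces of $\mathbf{V}'$, so Theorem~\ref{thrm-9} applies to them. Substituting the linearity of the image into the left-hand side yields
\[
[\phi(\alpha A_1 + \beta A_2),\phi(B)] = [\alpha\,\phi(A_1) + \beta\,\phi(A_2),\phi(B)],
\]
and an application of the first half of Theorem~\ref{thrm-9} gives
\[
[\alpha\,\phi(A_1) + \beta\,\phi(A_2),\phi(B)] = \alpha[\phi(A_1),\phi(B)] + \beta[\phi(A_2),\phi(B)],
\]
which is the first identity. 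Replacing the first argument by $A$ and linearizing the second argument in the same way, now via the second half of Theorem~\ref{thrm-9}, yields the second identity.

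The step I expect to be the main obstacle is the additivity $\phi(A_1+A_2) = \phi(A_1)+\phi(A_2)$. This is not purely formal: the complex sum $A_1+A_2$ is defined through a supremum over all decompositions $x = a+b$ in $\mathbf{V}$, whereas $\phi(A_1)+\phi(A_2)$ is defined through a supremum over decompositions $y = a'+b'$ in $\mathbf{V}'$, so one must verify that surjectivity together with the standing homogeneity hypothesis allows the two suprema (and the two infima) to be matched. In particular one has to check that every decomposition of $y$ in $\mathbf{V}'$ lifts through $\phi$ and that the modulus and argument components combine compatibly under the homogeneity assumption; this bookkeeping is precisely what is carried out in \cite[Theorem 4.5]{AMR}. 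Once additivity is granted, the rest is a routine appeal to Theorems~\ref{thrm-10} and \ref{thrm-9}.
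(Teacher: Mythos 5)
Your proposal is correct and follows essentially the same route as the paper: the paper's proof is a one-line appeal to Theorem~\ref{thrm-9}, Theorem~\ref{thrm-10}, and \cite[Theorem 4.5]{AMR}, which are exactly the three ingredients (bilinearity of the bracket, homogeneity of the image, and additivity of the image) that you assemble. Your additional remarks --- that Theorem~\ref{thrm-10} transfers from ideals to subspaces and that the additivity step is where the surjectivity and homogeneity hypotheses are actually used --- are accurate elaborations of what the paper leaves implicit.
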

\begin{proof}
The results follow from Theorem~\ref{thrm-9}, Theorem~\ref{thrm-10} and \cite[Theorem 4.5]{AMR}.
\end{proof}
\begin{Theorem}\textnormal{
Let $\phi:\mathbf{V}\rightarrow\mathbf{V}'$ be a surjective anti-homomorphism of lie-superalgebras, and
let $A_1=(\lambda_{A_1},\rho_{A_1})$, $A_2=(\lambda_{A_2},\rho_{A_2})$ and $B_1=(\lambda_{B_1},\rho_{B_1})$, $B_2=(\lambda_{B_2},\rho_{B_2})$ and $A=(\lambda_A,\rho_A)$, $B=(\lambda_B,\rho_B)$  be CIF subspaces of $\mathbf{V}'$. Then for any $\alpha,\beta\in K$, we have
$[\phi^{-1}(\alpha A_1+\beta A_2),\phi^{-1}(B)]=\alpha[\phi^{-1}(A_1),\phi^{-1}(B)]+\beta[\phi^{-1}(A_2),\phi^{-1}(B)],$
$[\phi^{-1}(A),\phi^{-1}(\alpha B_1+\beta B_2)]=\alpha[\phi^{-1}(A),\phi^{-1}(B_1)]+\beta[\phi^{-1}(A),\phi^{-1}(B_2)]$.}
\end{Theorem}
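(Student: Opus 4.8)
The plan is to mirror the proof of the preceding (image) theorem, replacing every statement about $\phi$ with its preimage counterpart. Concretely, the assertion reduces to the bilinearity of the complex intuitionistic fuzzy bracket established in Theorem~\ref{thrm-9}, together with the two compatibility properties of the preimage operation: the scalar identity $\phi^{-1}(\alpha B)=\alpha\phi^{-1}(B)$ from Theorem~\ref{thrm-11} and the additivity $\phi^{-1}(A+B)=\phi^{-1}(A)+\phi^{-1}(B)$ from Theorem~\ref{thrm-15}.

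First I would simplify the inner argument of the bracket. Applying Theorem~\ref{thrm-15} to the two CIF sets $\alpha A_1$ and $\beta A_2$ gives $\phi^{-1}(\alpha A_1+\beta A_2)=\phi^{-1}(\alpha A_1)+\phi^{-1}(\beta A_2)$, and Theorem~\ref{thrm-11} rewrites the two summands, so that
\begin{align*}
\phi^{-1}(\alpha A_1+\beta A_2)=\alpha\phi^{-1}(A_1)+\beta\phi^{-1}(A_2).
\end{align*}
Substituting this into the left-hand bracket and invoking the left-slot bilinearity of Theorem~\ref{thrm-9} with the CIF subspaces $\phi^{-1}(A_1)$, $\phi^{-1}(A_2)$ and $\phi^{-1}(B)$ yields
\begin{align*}
[\phi^{-1}(\alpha A_1+\beta A_2),\phi^{-1}(B)]&=[\alpha\phi^{-1}(A_1)+\beta\phi^{-1}(A_2),\phi^{-1}(B)]\\
&=\alpha[\phi^{-1}(A_1),\phi^{-1}(B)]+\beta[\phi^{-1}(A_2),\phi^{-1}(B)],
\end{align*}
which is the first claimed equality. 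The second equality follows identically, applying Theorem~\ref{thrm-15} and Theorem~\ref{thrm-11} to $\alpha B_1+\beta B_2$ and then the right-slot bilinearity of Theorem~\ref{thrm-9}.

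The argument is essentially bookkeeping, so the only point requiring care — and the place I would flag explicitly — is the compatibility of the hypotheses. Theorems~\ref{thrm-11} and~\ref{thrm-15} are phrased for CIF ideals, whereas here $A_1,A_2,B,\dots$ are only assumed to be CIF subspaces of $\mathbf{V}'$. The resolution is that the two preimage identities used above are purely computational: their proofs rely only on the definition of $\alpha B$, the definition of the preimage, and the linearity of $\phi$, and hence remain valid for arbitrary CIF sets, in particular for CIF subspaces. I would also record that the sums $\alpha A_1+\beta A_2$ and $\alpha B_1+\beta B_2$ are well defined under the standing homogeneity assumption and that preimages of CIF subspaces under the linear map $\phi$ are again CIF subspaces, so that Theorem~\ref{thrm-9} applies. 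With these remarks in place the chain of equalities above goes through verbatim, and no genuinely new estimate is needed beyond the three cited results.
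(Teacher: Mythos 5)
Your proof is correct and follows exactly the route the paper takes: its entire proof is the single sentence ``The results follow from Theorem~\ref{thrm-9}, Theorem~\ref{thrm-15} and Theorem~\ref{thrm-11},'' and you have simply expanded that citation into the explicit chain of equalities. Your added remark about the ideal-versus-subspace hypothesis mismatch in Theorems~\ref{thrm-11} and~\ref{thrm-15} is a genuine point the paper glosses over, and your resolution of it is sound.
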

\begin{proof}
The results follow from Theorem~\ref{thrm-9}, Theorem~\ref{thrm-15} and Theorem~\ref{thrm-11}.
\end{proof}

\end{document}